\definecolor{ForestGreen}{rgb}{0.1333,0.5451,0.1333}
\newcommand{\showccc}[0]{0}
\newcommand{\ccc}[2][nothing]{%
	\ifthenelse{\showccc=0}{}{
		\ensuremath{^{\Lsh\Rsh}}\marginpar{\raggedright\tiny\textsf{%
				\ifthenelse{\equal{#1}{nothing}}{}{\textbf{#1}\\}#2}}}}
\newcounter{hours}\newcounter{minutes}
\newcommand{\hhmm}{%
	\setcounter{hours}{\time/60}%
	\setcounter{minutes}{\time-\value{hours}*60}%
	\ifthenelse{\value{hours}<10}{0}{}\thehours:%
	\ifthenelse{\value{minutes}<10}{0}{}\theminutes}
\newtheorem{theorem}{Theorem}
\newtheorem{corollary}{Corollary}
\newtheorem{lemma}{Lemma}
\newtheorem{fact}{Fact}
\newtheorem{proposition}{Proposition}
\newtheorem{assumption}{Assumption}
\newtheorem{definition}{Definition}
\newtheorem*{rep@theorem}{\rep@title}
\newcommand{\newreptheorem}[2]{%
	\newenvironment{rep#1}[1]{%
		\def\rep@title{#2 \ref{##1}}%
		\begin{rep@theorem}}%
		{\end{rep@theorem}}}
\definecolor{burntorange}{rgb}{0.8, 0.33, 0.0}
\newcommand{\defeq}{:=}
\newcommand{\norm}[1]{\left\lVert#1\right\rVert}
\newcommand{\inprod}[2]{\left\langle#1, #2\right\rangle}
\newcommand{\eps}{\epsilon}
\newcommand{\lam}{\lambda}
\newcommand{\argmax}{\textup{argmax}}
\newcommand{\argmin}{\textup{argmin}} 
\newcommand{\R}{\mathbb{R}}
\newcommand{\diag}[1]{\textbf{\textup{diag}}\left(#1\right)}
\newcommand{\half}{\frac{1}{2}}
\DeclareMathOperator*{\E}{\mathbb{E}}
\newcommand{\Nor}{\mathcal{N}}
\newcommand{\Tr}{\textup{Tr}}
\newcommand{\opt}{\textup{OPT}}
\newcommand{\ma}{\mathbf{A}}
\newcommand{\mb}{\mathbf{B}}
\newcommand{\mq}{\mathbf{Q}}
\newcommand{\mm}{\mathbf{M}}
\newcommand{\my}{\mathbf{Y}}
\newcommand{\ai}{\ma_{i:}}
\newcommand{\aj}{\ma_{:j}}
\newcommand{\id}{\mathbf{I}}
\newcommand{\tO}{\widetilde{O}}
\newcommand{\nnz}{\textup{nnz}}
\newcommand{\poly}{\textup{poly}}
\newcommand{\lmax}{\lambda_{\textup{max}}}
\newcommand{\lmin}{\lambda_{\textup{min}}}
\newcommand{\Par}[1]{\left(#1\right)}
\newcommand{\Brack}[1]{\left[#1\right]}
\newcommand{\Brace}[1]{\left\{#1\right\}}
\newcommand{\Abs}[1]{\left|#1\right|}
\newcommand{\msig}{\boldsymbol{\Sigma}}
\newcommand{\mzero}{\mathbf{0}}
\newcommand{\1}{\mathbbm{1}}
\newcommand{\ksl}{\kappa^\star_i}
\newcommand{\ksr}{\kappa^\star_o}
\newcommand{\ksk}{\kappa^\star_o}
\newcommand{\mn}{\mathbf{N}}
\newcommand{\Sym}{\mathbb{S}}
\newcommand{\PSD}{\Sym_{\succeq \mzero}}
\newcommand{\PD}{\Sym_{\succ \mzero}}
\newcommand{\mw}{\mathbf{W}}
\newcommand{\mk}{\mathbf{K}}
\newcommand{\mws}{\mw_\star}
\newcommand{\tw}{\tilde{w}}
\newcommand{\tx}{\tilde{x}}
\newcommand{\tv}{\tilde{v}}
\newcommand{\tmw}{\widetilde{\mw}}
\newcommand{\tmq}{\widetilde{\mq}}
\newcommand{\alg}{\mathcal{A}}
\newcommand{\algtest}{\alg_{\textup{test}}}
\newcommand{\algpack}{\alg_{\textup{pack}}}
\newcommand{\tmv}{\mathcal{T}_{\textup{mv}}}
\newcommand{\DecideMPC}{\mathsf{DecideStructuredMPC}}
\newcommand{\mg}{\mathbf{G}}
\newcommand{\mmu}{\mathbf{U}}
\newcommand{\bx}{\bar{x}}
\newcommand{\md}{\mathbf{D}}
\newcommand{\tma}{\widetilde{\ma}}
\newcommand{\ms}{\mathbf{S}}
\newcommand{\Power}{\mathsf{Power}}
\newcommand{\mpack}{\mathbf{P}}
\newcommand{\mcov}{\mathbf{C}}
\newcommand{\kscale}{\kappa_{\textup{scale}}}
\newcommand{\tsl}{\tau^\star_i}
\newcommand{\tsr}{\tau^\star_o}
\newcommand{\xtrue}{x_{\textup{true}}}
\newcommand{\MPC}{\mathsf{MPC}}
\begin{document}

	\begin{titlepage}
		\def\thepage{}
		\thispagestyle{empty}
		\title{Fast and Near-Optimal Diagonal Preconditioning} 

		\date{}
		\author{
			Arun Jambulapati\thanks{Stanford University, {\tt jmblpati@stanford.edu}}
			\and
			Jerry Li\thanks{Microsoft Research, {\tt jerrl@microsoft.com}}
			\and
			Christopher Musco\thanks{New York University, {\tt cmusco@nyu.edu}}
			\and
			Aaron Sidford\thanks{Stanford University, {\tt sidford@stanford.edu}}
			\and
			Kevin Tian\thanks{Stanford University, {\tt kjtian@stanford.edu}}
		}
		\maketitle

\abstract{
The convergence rates of iterative methods for solving a linear system $\ma x = b$ typically depend on the condition number of the matrix $\ma$. Preconditioning is a common way of speeding up these methods by reducing that condition number in a computationally inexpensive way.
In this paper, we revisit the decades-old problem of how to best improve $\ma$'s condition number by left or right diagonal rescaling. We make progress on this problem in several directions. 

First, we provide new bounds for the classic heuristic of scaling $\ma$ by its diagonal values (a.k.a.\ Jacobi preconditioning). We prove that this approach reduces $\ma$'s condition number to within a quadratic factor of the best possible scaling. Second, we give a solver for structured mixed packing and covering semidefinite programs (MPC SDPs) which computes a constant-factor optimal scaling for $\ma$ in $\widetilde{O}\left(\nnz(\ma)\cdot \poly(\kappa^\star)\right)$ time; this matches the cost of solving the linear system after scaling up to a $\widetilde{O}(\poly(\kappa^\star))$ factor. Third, we demonstrate that a sufficiently general \emph{width-independent} MPC SDP solver would imply near-optimal runtimes for the scaling problems we consider, and natural variants concerned with measures of average conditioning.

Finally, we highlight connections of our preconditioning techniques to semi-random noise models, as well as applications in reducing risk in several statistical regression models.

} 		
	\end{titlepage}
	\pagenumbering{gobble}
	\setcounter{tocdepth}{2}
	{
		\hypersetup{linkcolor=black}
		\tableofcontents
	}
	\newpage
	\pagenumbering{arabic}

\section{Introduction}
\label{sec:intro}
Consider a linear system $\mk x = b$, where $\mk$ is a $d \times d$ matrix and $b$ is a $d$-dimensional vector. When $d$ is large, the most efficient algorithms for computing $x$ are usually iterative methods like the conjugate gradient method, gradient decent, or Chebyshev iteration \cite{Saad:2003}. These methods refine a solution over the course of multiple steps, and the number of steps required typically depends on an appropriate notion of \emph{condition number} of the matrix $\mk$. 

For example, let $\mk$ be positive definite with condition number $\kappa(\mk) \defeq \frac{\lmax(\mk)}{\lmin(\mk)}$, i.e.\ the ratio of its largest and smallest eigenvalues. The conjugate gradient method can compute an $\epsilon$-approximate solution to $x$ 
(for an appropriate definition of approximation) 
in $O(\sqrt{\kappa(\mk)} \log \frac 1 \eps))$ steps, where each step performs a single matrix-vector multiplication with $\mk$. While fast when $\mk$ is well-conditioned, computational costs of such methods grow when $\kappa(\mk)$ is large --- i.e.\ when $\mk$ is poorly conditioned.

Given the centrality of the condition number in the convergence of iterative methods, one common approach to improving runtimes is \emph{preconditioning} \cite{Greenbaum:1997}. While specifics come in many forms, the high-level goal of preconditioning is to find computationally inexpensive ways of reducing the condition number of a given linear system.\footnote{The minimum obtainable accuracy for solving a linear system on a finite precision computer is also limited
by the condition number, which offers a second motivation for preconditioning.} In this paper, we revisit one of the simplest, yet surprisingly effective, approaches for preconditioning: rescaling $\mk$'s rows or columns \cite{PiniGambolati:1990}.

\subsection{Problem statement}

We consider two versions of the rescaling problem, which have different properties and applications.

\medskip\noindent\textbf{Outer scaling.} We refer to the first rescaling problem considered as ``outer scaling.'' Let $\mk \in \R^{d \times d}$ be positive definite. The goal of outer scaling is to find a $d \times d$ positive diagonal matrix $\mw$ that minimizes or approximately minimizes\footnote{The symmetry of the rescaling $\mw^{\half} \mk \mw^{\half}$ is important, as it preserves the symmetry and positive definiteness of $\mk$, which is required for iterative solvers like the conjugate gradient method.}
\begin{align}
	\label{eq:outer}
	\kappa\Par{\mw^\half \mk \mw^\half}.
\end{align}
A solution to the linear system $\mk x = b$ can then be obtained by solving the better-conditioned system $\mw^{\half} \mk \mw^{\half}y = \mw^{\half}b$ and returning $x = \mw^{\half}y$. The problem of finding $\mw$ minimizing \eqref{eq:outer} has been studied since at least the 1950s \cite{ForsytheStraus:1955,Sluis:1969}. While the optimal diagonal $\mw$ can be found via semidefinite programming (SDP), the computational overhead of state-of-the-art general SDP solvers \cite{JiangKLP020, HuangJST21} would outweigh any benefits from reducing the condition number of $\mk$, since all known exact solvers at least require solving one linear system. Our goal is to develop much faster algorithms for computing an $\mw$ approximately minimizing \eqref{eq:outer}. 

We also consider the outer scaling problem in the common setting when we have access to a factorization of a positive definite matrix $\mk$ --- i.e.\ we have some matrix $\ma\in \R^{n\times d}$ where $\ma^\top \ma = \mk$. 
This case arises e.g.\ when computing $\min_x\|\ma x - b\|_2$, i.e. the solution of a least squares regression problem. The goal of the outer rescaling problem in this case is still to minimize $\kappa(\mw^\half \ma^\top \ma \mw^\half)$. 

\medskip\noindent\textbf{Inner scaling.} The second problem we consider is an ``inner scaling'' problem. 
Given a full rank $n \times d$ matrix $\ma$ with $n \ge d$, the goal of inner scaling is to find an $n\times n$ nonnegative diagonal matrix $\mw$ that minimizes or approximately minimizes
\begin{align}
	\label{eq:inner}
	\kappa\Par{\ma^\top \mw \ma}.
\end{align}
In contrast to the outer scaling problem for $\ma^\top \ma$, where the goal is to rescale columns of $\ma$ by $\mw^\half$, the inner scaling problem asks to rescale $\ma$'s rows. To the best of our knowledge, this problem has not been studied previously. It is not strictly a preconditioning problem, as a good inner scaling does not allow for a faster solution to a given least squares regression problem $\min_x \|\ma x - b\|_2$. Instead, it allows for a faster solution to the weighted least squares problem $\min_x \|\mw^{\half}\ma x - \mw^{\half}b\|_2$. 

The inner scaling problem has an interesting connection to \emph{semi-random} noise models for 
least-squares regression, which we highlight in Section~\ref{sec:apps}. As a motivating example, consider the case when there is a hidden parameter vector $\xtrue \in \R^d$ that we want to recover, and we have a ``good'' set of consistent observations $\ma_g \xtrue = b_g$, in the sense that $\kappa(\ma_g^\top \ma_g)$ is small. Here, we can think of $\ma_g$ as being drawn from a well-conditioned distribution. Now, suppose an adversary gives us a superset of these observations $(\ma, b)$ such that $\ma \xtrue = b$, and $\ma_g$ are an (unknown) subset of rows of $\ma$, but $\kappa(\ma^\top\ma) \gg \kappa(\ma_g^\top \ma_g)$. Perhaps counterintuitively, by giving us additional consistent data, the adversary can arbitrarily hinder the cost of iterative methods for finding $\xtrue$. Our inner scaling methods can be viewed as a way of ``robustifying'' linear system solvers to such noise models (by finding $\mw$ at least as good as the indicator of the rows of $\ma_g$, which are not known a priori). We also demonstrate additional applications in more general statistical regression settings.

\subsection{Our contributions}

We provide several new results on efficiently computing near-optimal diagonal scalings for approximately minimizing \eqref{eq:outer} and \eqref{eq:inner}. We denote the optimal solutions for these problems as follows.

\begin{definition}[Optimal scaled condition numbers]\label{def:optimal_condition_number}
	For positive definite $\mk\in \R^{d\times d}$ and full-rank $\ma\in \R^{n\times d}$ with $n \ge d$, we define\footnote{Without loss of generality, the minimization is over a compact set (and hence is a minimum and not an infimum). To see this, we can assume by scale invariance that $\mw \succeq \id$, and this places a finite upper bound on the range of $\mw$ as well (beyond which $\mw^\half \mk \mw^\half$ would have larger condition number than $\mk$).}
	\begin{align*}
		\ksk(\mk) \defeq \min_{\textup{diagonal } \mw \succeq \mzero}  \kappa\Par{\mw^\half \mk \mw^\half}\text{    and    } \ksl(\ma) \defeq \min_{\textup{diagonal } \mw \succeq \mzero}  \kappa\Par{\ma^\top \mw \ma}.
	\end{align*}
\end{definition}

Our first result concerns a classic heuristic for solving the outer scaling problem known as \emph{Jacobi preconditioning} --- simply choose $\mw$ to be the inverse-diagonal of $\mk$, so $\mw^\half \mk \mw^\half$ has a constant diagonal. Equivalently, if we have a factorization $\ma^\top\ma = \mk$, we choose $\mw_{jj} = \|\aj\|_2^{-2}$, where $\aj$ is the $j^\text{th}$ column of $\ma$. 
In Section \ref{sec:diagonalone}, we prove that this choice of $\mw$ always satisfies:
\begin{align*}
	\kappa\Par{\mw^{\half} \mk \mw^{\half}} \leq \kappa_o^*\left(\mk\right)^2.
\end{align*}
This bound adds a \emph{dimension-independent} result to the existing literature on the Jacobi preconditioner, which was previously known to achieve an approximation of $\kappa(\mw^\half \mk \mw^\half) \leq d\cdot \ksk(\mk)$ \cite{Sluis:1969}. We also prove a matching lower bound, showing that the quadratic approximation factor is tight: there is a matrix $\mk$ for which the Jacobi preconditioner $\mw$ gives $\kappa(\mw^\half \mk \mw^\half) = \Omega(\ksk(\mk)^2)$. 
Our upper bound and lower bound are given as Propositions~\ref{prop:diagonesub} and~\ref{prop:diagoneslb} respectively.

Our second set of results is more technically involved. We give algorithms for solving the outer and inner scaling problems up to a constant factor. In Section \ref{sec:slowoptimal} we prove the following.
\begin{theorem}[informal, see Theorems~\ref{thm:leftslow} and~\ref{thm:rightslow}]\label{thm:intro}
	Let $\ma \in \R^{n \times d}$ be full-rank with $n \geq d$. There is a randomized algorithm that with high probability computes a diagonal $\mw\in \R^{d\times d}$ satisfying $\kappa(\mw^\half \ma^\top  \ma\mw^\half ) \leq 2 \cdot \ksk(\ma^\top \ma)$  in time:
	\begin{align*}
		\tO\left(\nnz(\ma)\cdot \ksk(\ma^\top \ma)^{1.5}\right).
	\end{align*}
A variant of the same algorithm computes a diagonal $\mw\in \R^{n\times n}$ satisfying $\kappa(\ma^\top \mw \ma) \leq 2  \cdot \ksl(\ma)$  in time $\tO(\nnz(\ma)\cdot \ksl(\ma)^{1.5})$. 
\end{theorem}

In the above and throughout the paper, $\nnz(\ma)$ denotes the number of nonzero entries in $\ma$, and $\tO$ hides polylogarithmic factors in its argument
and the algorithm's failure probability: detailed runtimes are given in Theorems \ref{thm:leftslow} and  \ref{thm:rightslow}. The stated approximation factor of 2 is arbitrary --- with larger constants in the runtimes, we can achieve a $(1+\epsilon)$-approximation for any constant $\epsilon > 0$. However, we note that a $(1+\epsilon)$-approximation offers little advantage over a 2 factor approximation for most applications, where the goal is to solve a linear system in $\mw^\half\ma^\top\ma\mw^\half$ or $\ma^\top\mw\ma$.
\smallskip

Theorem \ref{thm:intro} is achieved by developing a highly-optimized solver for a specific mixed packing and covering (MPC) SDP which is used to find $\mw$. Importantly, the runtime of this solver nearly-matches the cost of solving a linear system in $\mw^{\half}\ma^\top \ma\mw^{\half}$ or $\ma^\top \mw\ma$ after optimally scaling, \emph{if the best scaling was known a priori}: $\tO(\nnz(\ma)\cdot \sqrt{\ksk(\ma^\top\ma)})$ and $\tO(\nnz(\ma)\cdot \sqrt{\ksl(\ma)})$, respectively. The runtime does not depend on the condition number of the original (non-rescaled) matrix $\ma$.

In comparison to our new result on the Jacobi preconditioner, Theorem \ref{thm:intro} does not immediately offer an advantage for the outer scaling problem, where the ultimate goal is to solve a linear system involving $\ma^\top \ma$. 
In particular, while obtaining better condition number, the computational overhead of computing $\mw$ means that we get an algorithm for finding $x$ with runtime depending on $\ksl(\ma)^{1.5}$ instead of $\sqrt{\ksl(\ma)^2} = \ksl(\ma)$. That said, when repeatedly solving multiple linear systems in the same matrix, Theorem \ref{thm:intro} could easily offer an advantage over Jacobi preconditioning. For the inner scaling problem, where Jacobi preconditioning does not apply, Theorem \ref{thm:intro} yields the first $\tO\left(\nnz(\ma)\cdot \poly(\ksl(\ma)\right)$ algorithm for solving an overdetermined consistent linear system in $\ma$. In Section~\ref{sec:apps}, we show applications of the inner scaling problem in several semi-random statistical models. For example, our methods enable efficient overdetermined linear system solving in a semi-random model where we receive a \emph{superset} of a well-conditioned set of measurements, but the superset can be arbitrarily poorly conditioned.

Moreoever, we provide an interesting direction towards improved runtime bounds and a broader connection to the literature on positive semidefinite programs. In particular, we show a sufficiently general \emph{width-independent} algorithm for approximately solving MPC SDP problems of the form
\[\text{does there exist } w \in \R^n_{\ge 0} \text{ such that } \lmax\Par{\sum_{i =1}^n w_i \mpack_i} \le \lmin\Par{\sum_{i =1}^n w_i \mcov_i}?\]
would imply Theorem \ref{thm:intro}'s runtime can be improved to have a square root dependence on the optimal condition number. Such a bound would allow for optimal preconditioning with essentially no computational overhead in comparison to the optimally-reweighted linear system solve. In Section \ref{sec:conjoptimal}, we state explicit requirements of such a (conjectured) improved solver which would imply such bounds.\footnote{An earlier version of this paper, including of a subset of the results described in the present work, was based on the MPC SDP solver of \cite{JambulapatiLLPT20}, which claimed to achieve the requirements stated in Section~\ref{sec:conjoptimal} and thus would obtain near-optimal rates for our scaling problems. However, an error was later discovered in \cite{JambulapatiLLPT20}, as recorded in the newest arXiv version of that work. For completeness and to highlight the connection between these problems, we include this set of results as a reduction to a conjectured subroutine in Section~\ref{sec:conjoptimal}.} 
We also demonstrate how such a general MPC SDP solver implies near-optimal rates for rescalings achieving \emph{average notions} of condition number, a common parameterization for runtimes of recent stochastic linear system solvers \cite{StrohmerV06, LeeS13, Johnson013, DefazioBL14, ZhuQRY16, Allen-Zhu17, AgarwalKKLNS20}.

Finally, while the statement of Theorem \ref{thm:intro} assumes access to a factorization $\ma^\top \ma$ of a matrix $\mk$ we wish to outer scale (i.e.\ we know $\ma$ such that $\mk = \ma^\top \ma$), we show how a modified implementation of our SDP solver, combined with a natural homotopy method and square root approximations, obtains a similar result when we only have black-box matrix-vector access to positive definite $\mk$ (without an explicit factorization). The following result, proven in Section \ref{sec:kernel}, addresses applications to e.g.\ solving systems involving large kernel matrices or implicit PD matrices arising in scientific computing applications.
\begin{theorem}[informal, see Theorem~\ref{thm:slowsqrt}]\label{thm:intro_kernel}
	Let $\mk \in \R^{d \times d}$ be positive definite. There is a randomized algorithm that with high probability computes a diagonal $\mw\in \R^{d\times d}$ satisfying $\kappa(\mw^{\half}\mk\mw^{\half}) \leq 2 \cdot \ksk(\mk)$ using $\tO(\ksk(\mk)^{1.5})$ matrix-vector products with $\mk$.
\end{theorem}

\subsection{Our techniques}
Besides our bounds for the Jacobi preconditioner, which are self-contained and discussed in Section~\ref{sec:diagonalone},  our main results are based on developing an efficient specialized SDP solver. 
We first consider the inner scaling problem and then discuss how a solution for this problem can be generalized to the outer scaling problem.
Our starting point is the observation that, to find a near optimal inner scaling, it suffices to solve the following problem with $\kappa$ chosen to equal $\ksl(\ma)$:
\begin{equation}\label{eq:decisionmpc}\text{find } w \in \R^n_{\ge 0} \text{ such that } \lmax\Par{\sum_{i =1}^n w_i a_ia_i^\top} \le 2\kappa\cdot  \lmin\Par{\sum_{i =1}^n  w_i a_ia_i^\top}.\end{equation}
We observe that \eqref{eq:decisionmpc} is a structured case of a mixed packing and covering SDP (as defined in \cite{JambulapatiLLPT20}), where the packing matrices and covering matrices are identical (variants of which were studied in \cite{Lee017, ChengG18, JambulapatiSS18}). By applying an oracle-based matrix multiplicative weights method (akin to those developed in \cite{AroraHK12}), as well as known efficient approximation algorithms for pure packing SDPs, we show that we can solve \eqref{eq:decisionmpc} in $O(\kappa \log d)$ iterations, as long as there exists a $w$ for which $\lmax\Par{\sum_{i } w_i a_ia_i^\top} \le \kappa\cdot  \lmin\Par{\sum_{i} w_i a_ia_i^\top}$. 
By searching over geometrically increasing values of $\kappa$, our method finds a near-optimal scaling and never requires more than $O(\ksl(\ma) \log d)$ iterations, even when $\ksl(\ma)$ is not known in advance.

It remains to show how to implement each iteration of the solver for  \eqref{eq:decisionmpc} efficiently. In short, each iteration is bottlenecked by computations involving matrix exponentials of a current reweighting $\ma^\top \mw \ma$ maintained by the algorithm. We first show that our solver is robust to approximations of these computations. Then,  by exploiting boundedness properties of $\ma^\top \mw \ma$, and leveraging low-degree polynomial approximations to the exponential, we show that each iteration is implementable in $\tO(\nnz(\ma) \cdot \sqrt{\ksl(\ma)})$ time due to the degrees of various polynomial approximations used in our method, concluding our proof for the inner scaling algorithm. 

For the outer scaling problem, we first consider the case where we are given a factorization $\mk = \ma^\top\ma$. Our first observation is that the eigenvalues of $\ma \mw \ma^\top \in \R^{n \times n}$ are the same as those of $\mw^\half \ma^\top \ma \mw^\half \in \R^{d \times d}$, except that there is an additional eigenvalue $0$ with multiplicity $n - d$. So, we can almost apply our inner scaling algorithm directly to $\ma^\top$, but need a way of dealing with these $0$ eigenvalues. We do so by extending $\ma^\top$ to an $n \times n$ full-rank matrix $\mb$ by completing its row span with an ``imagined'' basis. We implicitly update the weights on the imagined basis without actually computing it, and thus obtain a similar runtime for approximating $\ksr(\ma^\top\ma)$.

Finally, we consider the case when we do not have a factorization of $\mk$ (Theorem~\ref{thm:intro_kernel}). We show that our solver can be implemented given only matrix-vector multiplication access to the positive definite square root of $\mk$, $\ma = \mk^\half$. The required multiplications can be computed approximately via a matrix polynomial method. However, accurate polynomial approximations to the square root have degrees depending on $\kappa(\mk)$, which we were aiming to improve. We implement a homotopy method (reminiscent of techniques used by \cite{LiMP13, KapralovLMMS14, BubeckCLL18, AdilKPS19}) to break this chicken-and-egg problem, iteratively computing reweightings to $\mk + \lam \id$ for slowly-decreasing $\lam$ in logarithmically many phases. We remark that up to logarithmic factors, this result subsumes the result discussed in the previous paragraph.

\subsection{Related work}

While our inner scaling problem is new, the outer scaling problem --- i.e.\ that of computing an optimal  diagonal preconditioner --- has been widely studied. The most well-known existing results are those of van der Sluis \cite{Sluis:1969,GreenbaumRodrigue:1989}, who proved that the Jacobi preconditioner gives an $m$-factor approximation for the outer scaling problem where $m \leq d$ is the maximum number of non-zeros in any row of $\mk$. We review and slightly strengthen this result in Section \ref{sec:diagonalone}. It is also possible to prove that for certain limited classes of matrices, Jacobi preconditioning is actually optimal, and understanding these classes has been the subject of past work \cite{LivneGolub:2004,ForsytheStraus:1955}.

Other heuristics without provable approximation guarantees have also been studied. For example, a common approach is to choose $\mw$ to minimize $\|\id - \mw^\half\mk\mw^\half\|_{\textup{F}}$, where $\id$ is the $d\times d$ identity \cite{GroteHuckle:1997,BenziTuma:1999}. For solving the problem optimally, it has been noted that it suffices to solve a specific SDP \cite{QuYeZhou:2020}, but efficient algorithms (i.e.\ preconditioning algorithms with runtimes comparable to the cost of the linear system solve after preconditioning) have not been proposed.

\medskip\noindent\textbf{{Relationship to other problems.}} The problems studied in this paper bear superficial resemblance to the \emph{matrix scaling problem}, which has received recent attention in theoretical computer science and has a long history in scientific computing \cite{Allen-ZhuLiOliveira:2017,CohenMadryTsipras:2017,GargOliveira:2018}. The goal in matrix scaling is to find a diagonal scaling that equalizes the row and column norms of a matrix $\mk$. Matrix scaling has been used as a heuristic to improve the numerical stability of eigenvalue algorithms, to reduce the need for pivoting in direct solvers, and to improve $\mk$'s condition number \cite{KnightRuizUcar:2014}. However, the problem targets a different objective, so scaling algorithms do not yield provable guarantees on $\kappa ( \mw^{\half}\mk\mw^{\half})$ more generally.

Diagonal preconditioning is also related at high-level to \emph{adaptive preconditioning} methods like Adam \cite{KingmaBa:2015}, AdaGrad \cite{DuchiHazanSinger:2011}, and RMSProp \cite{HintonSrivastavaSwersky:2014} which are popular in machine learning applications. These methods rescale iterative steps by a diagonal matrix at each step of a stochastic gradient iteration, which would be similar to running gradient descent on a diagonally scaled $\mk$. A key difference however is that the preconditioner changes at each iteration, and these methods are often applied to non-quadratic, or even non-convex problems. 

\medskip\noindent\textbf{{Positive semidefinite programming.}} Our main algorithmic framework follows from developing a solver for a specialized mixed packing and covering SDP of the form described in \eqref{eq:decisionmpc}. A similar formulation was studied in \cite{JambulapatiSS18} as their Problem 3.1; this paper provides an different algorithm ours (in Section~\ref{sec:slowoptimal}) which is based in part on work of \cite{Lee017} which achieves a runtime polynomial in $\kappa^\star$ and the inverse multiplicative accuracy. 
We provide a tighter analysis which gives a refined dependence on $\kappa^\star$ compared to \cite{Lee017, JambulapatiSS18}.\footnote{We remark that techniques of \cite{Lee017} were also applied in \cite{ChengG18}, but no new runtimes appear to be claimed.}
We believe our techniques have more general applications, e.g.\ to Problem 3.1 of \cite{JambulapatiSS18}, and defer exploring this direction to future work. We give a more thorough comparison of problem formulations and algorithmic differences, in Appendix~\ref{app:mpcdiscuss}.

An important unanswered problem in the field of positive semidefinite programming is the development of an efficient \emph{width-independent} solver for general MPC SDPs. Such a solver would subsume the specific formulation in our work and \cite{JambulapatiSS18} (where the packing and covering matrices are multiples of each other). Such solvers are only currently known in the case of pure packing SDPs \cite{PengTZ16, Allen-ZhuLO16, Jambulapati0T20}. We include Section~\ref{sec:conjoptimal} as an interesting application such a solver, leaving open the possibility of constructions for diagonal preconditioners with near-optimal runtimes.

\medskip\noindent\textbf{{Semi-random noise models.}} The semi-random noise model we introduce in Section~\ref{sec:apps} for linear system solving follows a line of work originating in \cite{BlumS95} for graph coloring. A semi-random model consists of an (unknown) planted instance which a classical algorithm performs well against, augmented by additional information given by a ``monotone'' or ``helpful'' adversary masking the planted instance. Conceptually, when an algorithm fails given this ``helpful'' information, the algorithm may have overfit to its problem specification. This model has been studied in various statistical settings \cite{Jerrum92, FeigeK00, FeigeK01, MoitraPW16, MakarychevMV12}. Of particular relevance to our work, which studies robustness to semi-random noise in the context of fast algorithms (as opposed to the distinction between polynomial-time algorithms and computational intractability) is \cite{ChengG18}, which developed an algorithm for matrix completion under semi-random noise extending work of \cite{Lee017}.  	%

\section{Preliminaries}
\label{sec:prelims}

\paragraph{General notation.} We let $[n] \defeq \{1,2,\cdots,n\}$. Applied to a vector, $\norm{\cdot}_p$ is the $\ell_p$ norm. Applied to a matrix, $\norm{\cdot}_2$ is overloaded to denote the $\ell_2$ operator norm. $\Nor(\mu, \msig)$ denotes the multivariate Gaussian with specified mean and covariance. $\Delta^n$ is the simplex in $n$ dimensions (the subset of $\R^n_{\ge 0}$ with unit $\ell_1$ norm). We use $\tO$ to hide polylogarithmic factors in problem conditioning, dimensions, the target accuracy, and the failure probability. We say $\alpha\in \R$ is an $(\eps, \delta)$-approximation to $\beta \in \R$ if $\alpha = (1 + \eps')\beta + \delta'$, for $|\eps'| \le \eps$, $|\delta'| \le \delta$. An $(\eps, 0)$-approximation is an ``$\eps$-multiplicative approximation'' and a $(0, \delta)$-approximation is a ``$\delta$-additive approximation''. We let $\Nor(\mu, \msig)$ denote the multivariate Gaussian distribution of specified mean and covariance.

\paragraph{Matrices.} Throughout, matrices are denoted in boldface. We use $\nnz(\ma)$ to denote the number of nonzero entries of a matrix $\ma$. The set of $d \times d$ symmetric matrices is denoted $\Sym^d$, and the positive semidefinite and definite cones are $\PSD^d$ and $\PD^d$ respectively. For $\ma\in \Sym^d$, let $\lmax(\ma)$, $\lmin(\ma)$, and $\Tr(\ma)$ denote the largest magnitude eigenvalue, smallest eigenvalue, and trace. For $\ma \in \PD^d$, let $\kappa(\mm) \defeq \frac{\lmax(\mm)}{\lmin(\mm)}$ denote the condition number.
The inner product between matrices $\mm, \mn \in \Sym^d$ is the trace product, $\inprod{\mm}{\mn} \defeq \Tr(\mm\mn) = \sum_{i, j \in [d]} \mm_{ij} \mn_{ij}$. We use the Loewner order on $\Sym^d$: $\mm \preceq \mn$ if and only if $\mn - \mm \in \PSD^d$. $\id$ is the identity of appropriate dimension when clear. $\diag{w}$ for $w \in \R^n$ is the diagonal matrix with diagonal entries $w$.
For $\mm \in \PD^d$, $\norm{v}_{\mm} \defeq \sqrt{v^\top\mm v}$. For $\mm \in \Sym^d$ with eigendecomposition $\mathbf{V}^\top\boldsymbol{\Lambda}\mathbf{V}$, $\exp(\mm) \defeq \mathbf{V}^\top\exp(\boldsymbol{\Lambda})\mathbf{V}$, where $\exp(\boldsymbol{\Lambda})$ is applies entrywise to the diagonal. Similarly for $\mm = \mathbf{V}^\top\boldsymbol{\Lambda}\mathbf{V} \in \PSD^d$, $\mm^\half \defeq \mathbf{V}^\top\boldsymbol{\Lambda}^\half\mathbf{V}$. We denote the rows and columns of $\ma \in \R^{n \times d}$ by $\ai$ for $i \in [n]$ and $\aj$ for $j \in [d]$ respectively. 
Finally, $\tmv(\mm)$ denotes the time it takes to multiply a vector $v$ by $\mm$. We similarly denote the total cost of vector multiplication through a set $\{\mm_i\}_{i \in [n]}$ by $\tmv(\{\mm_i\}_{i \in [n]})$. We assume that $\tmv(\mm) = \Omega(d)$ for any $d \times d$ matrix, as that time is generally required to write the output.

\paragraph{Technical facts.} To speed up the iterations in our algorithm, we leverage several standard results on random projections and polynomial approximations to $e^x$ and $\sqrt{x}$.

\begin{fact}[Johnson-Lindenstrauss \cite{DasguptaG03}]\label{fact:jl}
	For $0 \leq \eps \leq 1$, let $k = \Theta\left(\frac{1}{\eps^2}\log \frac d \delta\right)$ for an appropriate constant. For $\mq \in \R^{k \times d}$ with independent uniformly random unit vector rows in $\R^d$ scaled down by $\frac 1 {\sqrt k}$, with probability $\ge 1 - \delta$ for any fixed $v \in \R^d$,
	\[(1 - \eps) \norm{\mq v}_2^2 \le \norm{v}_2^2 \le (1 + \eps)\norm{\mq v}_2^2. \]
\end{fact}

\begin{fact}[Polynomial approximation of $\exp$ \cite{SachdevaV14}, Theorem 4.1]\label{fact:polyexp}
	Let $\mm \in \PSD^d$ have $\mm \preceq R\id$. Then for any $\delta > 0$, there is an explicit polynomial $p$ of degree $O(\sqrt{R \log \frac 1 \delta + \log^2 \frac 1 \delta} )$ with
	\[\exp(-\mm) - \delta \id \preceq p(\mm) \preceq \exp(-\mm) + \delta \id.\]
\end{fact}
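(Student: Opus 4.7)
My plan is to reduce the matrix statement to a univariate polynomial approximation problem on $[0,R]$ via spectral functional calculus, and then construct an explicit approximating polynomial by truncating the Taylor series of $e^x$.

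Since $\mm \in \PSD^d$ with $\mm \preceq R\id$, the spectrum of $\mm$ lies in $[0,R]$. By spectral functional calculus, any scalar inequality of the form $(1-c)e^x \le p(x) \le (1+c)e^x$ holding for every $x \in [0,R]$ lifts eigenvalue-by-eigenvalue to $(1-c)\exp(\mm) \preceq p(\mm) \preceq (1+c)\exp(\mm)$ in the Loewner order. The task therefore reduces to exhibiting a polynomial of degree $O(R)$ that multiplicatively $c$-approximates $e^x$ on $[0,R]$.

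For this I would take $p_d(x) \defeq \sum_{k=0}^{d} x^k/k!$, the degree-$d$ Taylor truncation of $e^x$. Since every omitted term $x^k/k!$ is nonnegative on $[0,R]$, we immediately get $p_d(x) \le e^x \le (1+c) e^x$, so the upper bound is free. For the lower bound I need to show $e^x - p_d(x) \le c \cdot e^x$ uniformly on $[0,R]$. Choosing $d \ge 2R$ forces consecutive terms of the remainder $\sum_{k > d} x^k/k!$ to shrink by at least a factor of $2$ (since $x/(k+1) \le R/(d+1) \le 1/2$), so the tail is bounded by $2 x^{d+1}/(d+1)!$. Applying Stirling's lower bound $(d+1)! \ge \Par{(d+1)/e}^{d+1}$, the multiplicative error satisfies
\[\frac{e^x - p_d(x)}{e^x} \le 2 \Par{\frac{ex}{d+1}}^{d+1} e^{-x}.\]
The function $g(x) \defeq (ex/(d+1))^{d+1} e^{-x}$ is unimodal with unique maximum at $x = d+1$; when $d + 1 > R$, its maximum on $[0,R]$ occurs at the endpoint $x = R$ and equals $(eR/(d+1))^{d+1} e^{-R}$. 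For $d = CR$ with a sufficiently large absolute constant $C$ (e.g.\ $C = 2$ gives $(e/2)^{2R} e^{-R} = e^{R(1 - 2\ln 2)} = e^{-\Omega(R)}$), this falls below any fixed $c$ once $R$ exceeds a threshold depending on $c$. The regime of small $R$ is handled by absorbing an additive $O(\log(1/c))$ into the degree, which remains $O(R)$ since $R$ is then bounded by a constant in terms of $c$.

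The main obstacle is that the approximation must be \emph{multiplicative} rather than additive: the tail $x^{d+1}/(d+1)!$ is not small in absolute terms when $x$ is large, and one must exploit cancellation against $e^x$ through the $e^{-x}$ factor. The critical observation is that at $x = d+1$ the ratio $(ex/(d+1))^{d+1} e^{-x}$ equals exactly $1$, so it is essential to push the degree strictly past $R$ so that the maximum of $g$ over $[0,R]$ lives on its strictly increasing portion and can be bounded by evaluating at the endpoint $x = R$. The unimodality analysis combined with Stirling supplies the necessary quantitative margin, completing the degree-$O(R)$ construction.
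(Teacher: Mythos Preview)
The paper does not supply its own proof of this statement: it is recorded as a \emph{Fact} and attributed to \cite{SachdevaV14}, Theorem 4.1, with no argument given. So there is no in-paper proof to compare against.

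Your proposal is correct and is essentially the standard argument underlying the cited result. The reduction to a scalar inequality via the spectral theorem is the right first step, and the Taylor truncation with a Stirling-based tail bound is exactly how one establishes the scalar statement. Your observation that the ratio $(ex/(d+1))^{d+1}e^{-x}$ is unimodal with maximizer at $x=d+1$, so that pushing $d$ past $R$ lets you evaluate at the endpoint, is the key quantitative point and you have it right.

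One minor remark on the small-$R$ clause: saying the additive $O(\log(1/c))$ ``remains $O(R)$ since $R$ is then bounded by a constant'' is slightly glib, since if $R$ were genuinely allowed to tend to $0$ the degree would not be $O(R)$. The clean fix is to note that the hypothesis $\mm \preceq R\id$ is monotone in $R$, so without loss of generality $R \ge 1$; then your two regimes combine to give degree $O(R)$ uniformly. This is a cosmetic point and does not affect the substance of your argument.
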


\begin{restatable}[Polynomial approximation of $\sqrt{\cdot}$]{fact}{restatepolysqrt}\label{fact:poly_approx_sqrt}
Let $\mm \in \PD^d$ have $\mu \id \preceq \mm \preceq \kappa \mu \id$ where $\mu$ is known. Then for any $\delta \in (0, 1)$, there is an explicit polynomial $p$ of degree $O(\sqrt \kappa \log \frac \kappa \delta)$ with
\[(1 - \delta)\mm^\half \preceq p(\mm) \preceq (1 + \delta)\mm^\half.\]
\end{restatable}
We give a proof of Fact~\ref{fact:poly_approx_sqrt} in Appendix~\ref{app:prelims} for completeness, using a result on polynomial approximations of analytic functions. 	%

\section{Normalizing the diagonal}
\label{sec:diagonalone}

In this section, we analyze a popular heuristic for computing diagonal preconditioners. Given a positive definite matrix $\mk \in \PD^d$, consider applying the outer scaling
\begin{equation}\label{eq:diagones}\mw^{\half} \mk \mw^{\half},\text{ where } \mw = \diag{w} \text{ and } w_i \defeq \mk_{ii}^{-1} \text{ for all } i \in [d].\end{equation}
In other words, the result of this scaling is to simply normalize the diagonal of $\mk$ to be all ones; we remark $\mw$ has strictly positive diagonal entries, else $\mk$ is not positive definite. Also called the Jacobi preconditioner, a result of Van de Sluis \cite{GreenbaumRodrigue:1989,Sluis:1969} proves that \emph{for any matrix} this scaling leads to a condition number that is within an $m$ factor of optimal, where $m \leq d$ is the maximum number of non-zeros in any row of $\mk$.
For completeness, we state a generalization of Van de Sluis's result below. We also require a simple fact; both are proved in Appendix~\ref{app:prelims}.

\begin{restatable}{fact}{restateprodkappa}\label{fact:prodkappa}
For any $\ma, \mb \in \PD^d$, $\kappa(\ma^\half\mb\ma^\half) \le \kappa(\ma) \kappa(\mb)$.
\end{restatable}

\begin{restatable}{proposition}{restateddd}\label{prop:diagdimensiondependence}
	Let $\mw$ be defined as in \eqref{eq:diagones} and let $m$ denote the maximum number of non-zero's in any row of $\mk$. Then,
	\[\kappa\Par{\mw^{\half} \mk \mw^{\half}} \le \min\left(m,\sqrt{\nnz(\mk)}\right)\cdot \ksk\Par{\mk}.\]
\end{restatable}
Note that $m$ and $\sqrt{\nnz(\mk)}$ are both $\leq d$, so it follows that 
	$\kappa(\mw^{\half} \mk \mw^{\half}) \leq {d}\cdot \ksk\Par{\mk}$. While the approximation factor in  Proposition \ref{prop:diagdimensiondependence} depends on the dimension or sparsity of $\mk$, we show that a similar analysis actually yields a \emph{dimension-independent} approximation. Specifically, the Jacobi preconditioner always obtains condition number no worse than the optimal squared. To the best of our knowledge, this simple but powerful bound has been observed in prior work. 

\begin{proposition}\label{prop:diagonesub}
Let $\mw$ be defined as in \eqref{eq:diagones}. Then,
\[\kappa\Par{\mw^{\half} \mk \mw^{\half}} \le \Par{\ksk\Par{\mk}}^2.\]
\end{proposition}

\begin{proof}
Let $\mws$ attain the minimum in the definition of $\ksk$ (Definition~\ref{def:optimal_condition_number}), i.e.\ $\kappa(\mk_\star) = \ksk(\mk)$ for $\mk_\star \defeq \mws^{\half} \mk \mws^{\half}$. Note that since $[\mw^{\half} \mk \mw^{\half}]_{ii} = 1$ by definition of $\mw$ it follows that for all $i$
\[
[\mws \mw^{-1}]_{ii}
= [\mws \mw^{-1}]_{ii} \cdot [\mw^{\half} \mk \mw^{\half}]_{ii} =
 [\mk_\star]_{ii}  = e_i^\top \mk_\star e_i \in [\lambda_{\min}(\mk_\star), \lam_{\max}(\mk_\star)] 
\]
where the last step used that $ \lambda_{\min}(\mk_\star) \id \preceq \mk_\star \preceq \lambda_{\max}(\mk_\star) \id$. Consequently, for  $\tmw \defeq \mws^{-1} \mw$ it follows that $\kappa(\tmw) = \kappa(\tmw^{-1}) \leq \lambda_{\max}(\mk_\star) / \lam_{\min}(\mk_\star) = \kappa(\mk_\star)$. The result follows from Fact~\ref{fact:prodkappa} as
\begin{align*}
	\kappa\Par{\mw^\half \mk \mw^\half} &= \kappa\Par{\tmw^\half \mk_\star\tmw^\half} \le \kappa\Par{\tmw}\kappa\Par{\mk_\star} \le \Par{\ksk}^2. \qedhere
\end{align*}
\end{proof}

Next, we demonstrate that Proposition~\ref{prop:diagonesub} is essentially tight by exhibiting a family of matrices which attain the bound of Proposition~\ref{prop:diagonesub} up to a constant factor. At a high level, our strategy is to create two blocks where the ``scales'' of the diagonal normalizing rescaling are at odds, whereas a simple rescaling of one of the blocks would result in a quadratic savings in conditioning.

\begin{proposition}\label{prop:diagoneslb}
Consider a $2d \times 2d$ matrix $\mm$ such that
\[\mk = \begin{pmatrix} \ma & \mzero \\ \mzero & \mb \end{pmatrix},\; \ma = \sqrt{d} \id + \1\1^\top,\; \mb = \id - \frac 1 {\sqrt d + d}\1\1^\top, \]
where $\ma$ and $\mb$ are $d \times d$. Then, defining $\mw$ as in \eqref{eq:diagones}, 
\[\kappa\Par{\mw^{\half} \mk \mw^{\half}} = \Theta(d),\; \ksk\Par{\mk} = \Theta\Par{\sqrt d}.\]
\end{proposition}
\begin{proof}
Because $\mw^{\half}\mk\mw^{\half}$ is blockwise separable, to understand its eigenvalue distribution it suffices to understand the eigenvalues of the two blocks. First, the upper-left block (the rescaling of the matrix $\ma$) is multiplied by $\frac{1}{\sqrt{d} + 1}$. It is straightforward to see that the resulting eigenvalues are
\[\frac{\sqrt d}{\sqrt d + 1} \text{ with multiplicity } d - 1,\; \sqrt d \text{ with multiplicity } 1.\]
Similarly, the bottom-right block is multiplied by $\frac{d + \sqrt d}{d + \sqrt d - 1}$, and hence its rescaled eigenvalues are
\[\frac{d + \sqrt d}{d + \sqrt d - 1} \text{ with multiplicity } d - 1,\; \frac{\sqrt d}{d + \sqrt d - 1} \text{ with multiplicity } 1.\]
Hence, the condition number of $\mw^\half \mk \mw^\half$ is $d + \sqrt d - 1 = \Theta(d)$. However, had we rescaled the top-left block to be a $\sqrt{d}$ factor smaller, it is straightforward to see the resulting condition number is $O(\sqrt{d})$. On the other hand, since the condition number of $\mk$ is $O(d)$,  Proposition~\ref{prop:diagonesub} shows that the optimal condition number $\ksk(\mk)$ is $\Omega(\sqrt d)$, and combining yields the claim. We remark that as $d \to \infty$, the constants in the upper and lower bounds agree up to a low-order term.
\end{proof} 	%

\section{Constant-factor optimal inner and outer scalings}
\label{sec:slowoptimal}

In this section, we give near-linear time constructions of inner and outer scalings which attain the optimal reweighted condition number up to a constant factor. In the latter case, we will assume knowledge of a factorization $\mk = \ma^\top \ma$ in this section. As a starting point to our development, we develop a custom MPC SDP solver for structured instances (namely, when the covering matrices are multiples of the packing matrices) in Section~\ref{ssec:mpcslow}. Using this primitive, we then handle computation of optimal inner rescalings in Section~\ref{ssec:kslslow} and outer rescalings in Section~\ref{ssec:ksrslow}.

\subsection{Reducing structured mixed packing-covering to pure packing}\label{ssec:mpcslow}

We first develop a general solver for the following structured ``mixed packing-covering'' decision problem: given a set of matrices $\{\ma_i\}_{i \in [n]} \in \PSD^d$ and a parameter $\kappa > 1$, we wish to determine
\begin{equation}\label{eq:mpcexact}\text{does there exist } w \in \R^n_{\ge 0} \text{ such that } \lmax\Par{\sum_{i \in [n]} w_i \ma_i} \le \kappa \lmin\Par{\sum_{i \in [n]} w_i \ma_i}?\end{equation}
We note that the problem \eqref{eq:mpcexact} is a special case of the more general mixed packing-covering semidefinite programming problem defined in \cite{JambulapatiLLPT20}, with packing matrices $\{\ma_i\}_{i \in [n]}$ and covering matrices $\{\kappa \ma_i\}_{i \in [n]}$; see discussion in Appendix~\ref{app:mpcdiscuss}. We define an $\eps$-\emph{tolerant tester} for the decision problem \eqref{eq:mpcexact} to be an algorithm which returns ``yes'' and a set of feasible weights whenever \eqref{eq:mpcexact} is feasible for the value $(1 - \eps)\kappa$, and ``no'' whenever it is infeasible for the value $(1 + \eps)\kappa$ (and can return either answer in the middle range). After developing such a tester, we apply it to our rescaling problems by incrementally searching for the optimal $\kappa$. It may be helpful to think of the parameter $\eps$ as a sufficiently small constant (e.g.\ $\frac 1 {10}$); it will be set this way in our applications.

To develop a tolerant tester for \eqref{eq:mpcexact}, we require access to an algorithm for solving the optimization variant of a pure packing SDP,
\begin{equation}\label{eq:optv}\opt(v) \defeq \max_{
		 w \in \R^n_{\ge 0}\,:\,\sum_{i \in [n]} w_i \ma_i \preceq \id
} v^\top w.\end{equation}
The pure packing SDP solver we use is based on combining a solver for the testing variant of \eqref{eq:optv} by \cite{Jambulapati0T20} with a binary search (we use the result in \cite{Jambulapati0T20} as it has the state-of-the-art dependence on $\eps$). We state its guarantees as Proposition~\ref{prop:optv}, and defer a proof to Appendix~\ref{app:slowoptimal}.

\begin{proposition}\label{prop:optv}
	Let $\opt_+$ and $\opt_-$ be known upper and lower bounds on $\opt(v)$ defined in \eqref{eq:optv}. There is an algorithm, $\algpack$, which succeeds with probability $\ge 1 - \delta$, whose runtime is 
	\[O\Par{\tmv\Par{\Brace{\ma_i}_{i \in [n]}} \cdot \frac{\log^2(ndT(\delta\eps)^{-1})\log^2 d}{\eps^5}} \cdot T \text{ for } T = O\Par{\log\log \frac{\opt_+}{\opt_-} + \log \frac 1 \eps},\]
	and returns an $\eps$-multiplicative approximation to $\opt(v)$, and $w$ attaining this approximation.
\end{proposition}

We require one additional tool, a regret analysis of matrix multiplicative weights from \cite{ZhuLO15}. This will be used in Algorithm~\ref{alg:decidempc} to certify that the constraints are met in the ``yes'' case.

\begin{proposition}[Theorem 3.1, \cite{ZhuLO15}]\label{prop:mmw}
Consider a sequence of gain matrices $\{\mg_t\}_{0 \le t < T} \subset \PSD^d$, which all satisfy for step size $\eta > 0$, $\norm{\eta \mg_t}_2 \le 1$. Then iteratively defining (from $\ms_0 \defeq \mzero$)
\[\my_t \defeq \frac{\exp(\ms_t)}{\Tr\exp(\ms_t)},\; \ms_{t + 1} \defeq \ms_t - \eta \mg_t,\]
we have the bound for any $\mmu \in \PSD^d$ with $\Tr(\mmu) = 1$,
\[\frac{1}{T}\sum_{0 \le t < T} \inprod{\mg_t}{\my_t - \mmu} \le \frac{\log d}{\eta T} + \frac 1 T \sum_{t \in [T]} \eta \norm{\mg_t}_2 \inprod{\mg_t}{\my_t}.\]
\end{proposition}

Finally, we are ready to state our $\eps$-tolerant tester for the decision problem \eqref{eq:mpcexact} as Algorithm~\ref{alg:decidempc}. Our algorithm is inspired by that in prior work \cite{Lee017, JambulapatiSS18}, which reduces mixed packing-covering instances to calls to a packing solver. More concretely, Algorithm~\ref{alg:decidempc} uses the guarantees of packing solvers in a matrix multiplicative weights framework to obtain covering guarantees.
\begin{algorithm}[ht!]
	\caption{$\DecideMPC(\{\ma_i\}_{i \in [n]}, \kappa, \algpack, \delta, \eps, \rho)$}
	\begin{algorithmic}[1]\label{alg:decidempc}
		\STATE \textbf{Input:} $\{\ma_i\}_{i \in [n]} \in \PSD^{d \times d}$ such that $\lmax(\ma_i) \in [1, \rho]$ for all $i \in [n]$, $\kappa > 1$, $\algpack$ which on input $v \in \R^n_{\ge 0}$ returns $w \in \R^n_{\ge 0}$ satisfying (recalling definition \eqref{eq:optv})
		\[\sum_{i \in [n]} w_i \ma_i \preceq \id,\; v^\top w \ge \Par{1 - \frac \eps {10}} \opt(v), \text{ with probability} \ge 1 - \frac{\delta}{2T} \text{ for some } T = O\Par{\frac{\kappa \log d}{\eps^2}},\]
		failure probability $\delta \in (0, 1)$, tolerance $\eps \in (0, 1)$
		\STATE \textbf{Output:} With probability $\ge 1 - \delta$: ``yes'' or ``no'' is returned. The algorithm must return ``yes'' if there exists $w \in \R^n_{\ge 0}$ with 
		\begin{equation}\label{eq:feasiblempc}\lmax\Par{\sum_{i \in [n]} w_i \ma_i} \le (1 - \eps)\kappa\lmin\Par{\sum_{i \in [n]} w_i \ma_i},\end{equation}
		and if ``yes'' is returned, a vector $w$ is outputted with
		\begin{equation}\label{eq:feasiblempcplus}\lmax\Par{\sum_{i \in [n]} w_i \ma_i} \le (1 + \eps) \kappa \lmin\Par{\sum_{i \in [n]} w_i \ma_i}.\end{equation}
		\STATE $\eta \gets \frac \eps {10\kappa}$, $T \gets \left\lceil \frac{10\log d}{\eta\eps} \right\rceil$, $\my_0 \gets \frac 1 d \id$, $\ms_0 \gets \mzero$
		\FOR{$0 \le t < T$}
		\STATE $\my_{t} \gets \frac{\exp(\ms_{t})}{\Tr\exp(\ms_{t})}$
		\STATE $v_t \gets$ entrywise nonnegative $(\frac \eps {10}, \frac{\eps}{10\kappa n})$-approximations to $\{\inprod{\ma_i}{\my_t}\}_{i \in [n]}$, with probability $\ge 1 - \frac{\delta}{4T}$
		\STATE $x_t \gets \algpack(\kappa v_t)$
		\STATE $\mg_{t} \gets \kappa \sum_{i \in [n]} [x_t]_i \ma_i$
		\IF{$\kappa \inprod{x_t}{v_t} < 1 - \frac \eps 5$}
		\RETURN ``no''
		\ENDIF
		\STATE $\ms_{t + 1} \gets \ms_t - \eta \mg_t$
		\STATE $\tau \gets \frac{\log d}{\eps}$-additive approximation to $\lam_{\min}(-\ms_{t + 1})$, with probability $\ge 1 - \frac{\delta}{4T}$
		\IF{$\tau \ge \frac{12\log d}{\eps}$}
		\RETURN (``yes'', $\bx$) for $\bx \defeq \frac 1 {t + 1} \sum_{0 \le s \le t} x_s$
		\ENDIF
		\ENDFOR
		\RETURN (``yes'', $\bx$) for $\bx \defeq \frac 1 T \sum_{0 \le t < T} x_t$
	\end{algorithmic}
\end{algorithm}

We begin by giving a correctness proof of Algorithm~\ref{alg:decidempc}, and follow with a runtime analysis under using the subroutine $\algpack$ of Proposition~\ref{prop:optv} for Line 7. We discuss further computational issues regarding implementing Lines 6 and 13 as they arise in specific applications.

\begin{lemma}\label{lem:decidempccorrect}
Algorithm~\ref{alg:decidempc} meets its output guarantees (as specified on Line 2).
\end{lemma}
\begin{proof}
Throughout, assume all calls to $\algpack$ and the computation of approximations as given by Lines 6 and 13 succeed. By union bounding over $T$ iterations, this gives the failure probability.

We next show that if the algorithm terminates on Line 15, it is always correct. By the definition of $\algpack$, all $\mg_t \preceq \kappa \id$, so throughout, $-\ms_{t + 1} \preceq \eta \kappa T \id \preceq \frac{11\kappa\log d}{\eps}$. If the check on Line 14 passes, we must have $-\ms_{t + 1} \succeq \frac{11\log d}{\eps} \id$, and hence the matrix $-\frac{1}{t + 1}\ms_{t + 1}$ has condition number at most $\kappa$. The conclusion follows as $\sum_{i \in [n]} \bx_i \ma_i = -\frac{1}{(t + 1)\eta\kappa}\ms_{t + 1}$ has the same condition number as $-\frac{1}{t+1}\ms_{t + 1}$.

We next prove correctness in the ``no'' case. Suppose the problem \eqref{eq:feasiblempc} is feasible; we show that the check in Line 9 will never pass (so the algorithm never returns ``no''). Let $v^\star_t$ be the vector which is entrywise exactly $\{\inprod{\ma_i}{\my_t}\}_{i \in [n]}$, and let $v'_t$ be a $\frac{\eps}{10}$-multiplicative approximation to $v^\star_t$ such that $v_t$ is an entrywise $\frac{\eps}{10n}$-additive approximation to $v'_t$. By befinition, it is clear $\opt(\kappa v'_t) \ge (1 - \frac \eps {10})\opt(\kappa v^\star_t)$. Moreover, by the assumption that all $\lmax(\ma_i) \ge 1$, all $w_i \le 1$ in the feasible region of the problem \eqref{eq:optv}. Hence, the combined additive error incurred by the approximation $\inprod{\kappa v_t}{w}$ to $\inprod{\kappa v'_t}{w}$ for any feasible $w$ is $\frac{\eps}{10}$. Altogether, by the guarantee of $\algpack$,
\begin{equation}\label{eq:optrelate}\kappa \inprod{v_t}{x_t} \ge \Par{1 - \frac \eps {10}}^2 \opt(\kappa v^\star_t) - \frac{\eps}{10}, \text{ where } \opt(\kappa v^\star_t) = \max_{\substack{\sum_{i \in [n]} w_i \ma_i \preceq \id \\ w \in \R^n_{\ge 0}}} \kappa \inprod{\my_t}{\sum_{i \in [n]} w_i \ma_i}.\end{equation}
However, by feasibility of \eqref{eq:feasiblempc} and scale invariance, there exists a $w \in \R^n_{\ge 0}$ with $\sum_{i \in [n]} w_i \ma_i \preceq \id$ and $(1 - \eps) \kappa \sum_{i \in [n]} w_i \ma_i \succeq \id$. Since $\my_t$ has trace $1$, this certifies $\opt(\kappa v_t^\star) \ge \frac 1 {1 - \eps}$, and thus \[\kappa\inprod{v_t}{x_t} \ge \Par{1 - \frac \eps {10}}^2 \cdot \frac 1 {1 - \eps} - \frac{\eps}{10} > 1 - \frac \eps 5.\]
Hence, whenever the algorithm returns ``no'' it is correct. Assume for the remainder of the proof that ``yes'' is returned on Line 18. Next, we observe that whenever $\alg$ succeeds on iteration $t$, $\sum_{i \in [n]} [x_t]_i \ma_i \preceq \id$, and hence in every iteration we have $\norm{\mg_t}_2 \le \kappa$. Proposition~\ref{prop:mmw} then gives
\[\frac{1}{T} \sum_{0 \le t < T} \inprod{\mg_t}{\my_t - \mmu} \le \frac{\log d}{\eta T} + \frac 1 T \sum_{t \in [T]} \eta \norm{\mg_t}_2 \inprod{\mg_t}{\my_t},\text{ for all } \mmu \in \PSD^d \text{ with } \Tr(\mmu) = 1.\]
Rearranging the above display, using $\eta \norm{\mg_t}_2 \le \frac \eps {10}$, and minimizing over $\mmu$ yields
\[\lmin\Par{\frac 1 T \sum_{0 \le t < T} \mg_t} \ge \frac {1 - \frac \eps {10}} T \sum_{0 \le t < T} \inprod{\mg_t}{\my_t} - \frac{\log d}{\eta T} \ge \frac {1 - \frac \eps {10}} T \sum_{0 \le t < T} \inprod{\mg_t}{\my_t} - \frac \eps {10}.\]
The last inequality used the definition of $T$. However, by definition of $v_t$, we have for all $0 \le t < T$,
\begin{equation}\label{eq:ygbound}\inprod{\my_t}{\mg_t} = \kappa \sum_{i \in [n]} [x_t]_i \inprod{\ma_i}{\my_t} \ge \Par{1 - \frac \eps {10}} \kappa \inprod{x_t}{v_t} \ge \Par{1 - \frac \eps {10}}\Par{1 - \frac \eps 5} \ge 1 - \frac {3\eps}{10}.\end{equation}
The second-to-last inequality used that Line 9 did not pass. Combining the previous two displays,
\[\kappa\lmin\Par{\sum_{i \in [n]} \bx_i \ma_i} = \lmin\Par{\frac 1 T \sum_{0 \le t < T} \mg_t} \ge \Par{1 - \frac \eps {10}}\Par{1 - \frac {3\eps}{10}} - \frac \eps {10} \ge 1 - \frac \eps 2.\]
On the other hand, since all $0 \le t < T$ have $\sum_{i \in [n]} [x_t]_i \ma_i \preceq \id$, by convexity $\sum_{i \in [n]} \bx_i \ma_i \preceq \id$. Combining these two guarantees and $(1 + \eps)(1 - \frac \eps 2) \ge 1$ shows $\bx$ is correct for the ``yes'' case.
\end{proof}

We remark that the proof of the ``no'' case in Lemma~\ref{lem:decidempccorrect} demonstrates that in all calls to $\alg$, we can set our lower bound $\opt_- = 1 - O(\eps)$, since the binary search of Proposition~\ref{prop:optv} will never need to check smaller values to determine whether the test on Line 9 passes. On the other hand, the definition of $\opt(\kappa v^\star_t)$ in \eqref{eq:optrelate}, as well as $\opt(\kappa v_t) \le (1 + \frac \eps {10})\opt(\kappa v^\star_t)$ by the multiplicative approximation guarantee, shows that it suffices to set $\opt_+ \le (1 + O(\eps)) \kappa$. Using these bounds in the context of Proposition~\ref{prop:optv} implies the following bounds on the cost of $\algpack$ in Algorithm~\ref{alg:decidempc}.

\begin{corollary}\label{cor:algcostmpc}
Using the algorithm of Proposition~\ref{prop:optv} as $\algpack$ in Algorithm~\ref{alg:decidempc} with $\frac{\opt_+}{\opt_-} = O(\kappa)$, for any sufficiently small constant $\eps > 0$, the cost of each call to $\algpack$ is
\[O\Par{\tmv\Par{\Brace{\ma_i}_{i \in [n]}} \cdot \log^2\Par{\frac{nd\kappa}{\delta}} \log^2(d) \log\log\kappa}.\] 
\end{corollary}

\subsection{Constant-factor optimal inner scaling}\label{ssec:kslslow}

In this section, we show how to use Algorithm~\ref{alg:decidempc} to efficiently compute a reweighting $w$ such that for $\mw \defeq \diag{w}$, $\kappa(\ma^\top \mw \ma) = O(\ksl(\ma))$ for full-rank $\ma \in \R^{n \times d}$ with $n \ge d$. In particular, this implies $w$ is a constant-factor optimal inner scaling. We use $a_i$ to denote row $\ai$ in this section, and assume by scale invariance in the definition of reweightings $w$ that all $\norm{a_i}_2 = 1$. We also define the following rank-one matrices in $\PSD^d$:
\begin{equation}\label{eq:leftmats}\ma_i \defeq a_ia_i^\top, \text{ for all } i \in [n].\end{equation}
By observation, all $\ma_i$ satisfy the eigenvalue requirement of Algorithm~\ref{alg:decidempc} via $\norm{a_i}_2 = 1$. The following two sections respectively demonstrate how to efficiently implement Lines 13 and 6 of Algorithm~\ref{alg:decidempc}, by using polynomial approximations to the exponential and random projections.

\subsubsection{Estimating the smallest eigenvalue}

We discuss the computation of the approximate smallest eigenvalue of a matrix $\mm$. At a high level, our strategy is to use power method on the negative exponential $\exp(-\mm)$, which we approximate to additive error via Fact~\ref{fact:ratexp}. We first state a guarantee on the power method from \cite{MuscoM15}.

\begin{fact}[Theorem 1, \cite{MuscoM15}]\label{fact:powermethod}
	For any $\delta \in (0, 1)$ and $\mm \in \Sym_{\ge 0}^d$, there is an algorithm, $\Power(\mm, \delta)$, which returns with probability at least $1 - \delta$ a value $V$ such that $\lmax(\mm) \ge V \ge 0.9 \lmax(\mm)$. The algorithm runs in time $O(\tmv(\mm)\log \frac d \delta)$, and is performed as follows:
	\begin{enumerate}
		\item Let $u \in \R^d$ be a random unit vector.
		\item For some $\Delta = O(\log \frac d \delta)$, let $v \gets \frac{\mm^\Delta u}{\norm{\mm^\Delta u}_2}$.
		\item Return $\norm{\mm v}_2$.
	\end{enumerate}
\end{fact}

We next state our main technical tool, a low-degree approximation to the inverse exponential of a bounded positive semidefinite matrix with additive error, following immediately from Fact~\ref{fact:polyexp}. 

\begin{corollary}\label{cor:approxexp}
Given $R > 1$, $\mm \in \PSD^d$, and $\kappa$ with $\mm \preceq \kappa \id$, we can compute a degree-$O(\sqrt{\kappa R} + R)$ polynomial $p$ such that for $\mpack = p(\mm)$,
\[\exp\Par{-\mm} - \exp\Par{-R}\id \preceq \mpack \preceq \exp\Par{-\mm} + \exp\Par{-R}\id.\]
\end{corollary}

Combining the previous two results, we obtain our smallest eigenvalue approximation.

\begin{lemma}\label{lem:approxlmin}
Given $R > 1$, $\delta \in (0, 1)$, $\mm \in \PSD^d$, and $\kappa$ with $\mm \preceq \kappa\id$, we can compute a $R$-additive approximation to $\lmin(\mm)$ with probability $\ge 1 - \delta$ in time
\[O\Par{\tmv(\mm) \cdot \Par{\sqrt{\kappa R} +R} \log\Par{\frac{d}{\delta}}}.\]
\end{lemma}
\begin{proof}
Our algorithm is to apply the power method to the polynomial approximation in Corollary~\ref{cor:approxexp}. By shifting the definition of $R$ in Corollary~\ref{cor:approxexp} by a constant, correctness follows from the guarantees of Fact~\ref{fact:powermethod}. The runtime combines the degree in Corollary~\ref{cor:approxexp} with the overhead of Fact~\ref{fact:powermethod}.
\end{proof}

\subsubsection{Estimating inner products with a negative matrix exponential}

We next discuss computational issues regarding Line 6 of Algorithm~\ref{alg:decidempc}. We begin by handling approximation of $\Tr\exp(-\mm)$ for $\mm \in \PSD^d$ with bounded smallest and largest eigenvalues.

\begin{lemma}\label{lem:trexpnegm}
Given $\mm \in \PSD^d$, $R, \kappa > 0$ such that $\lmin(\mm) \le R$ and $\lmax(\mm) \le \kappa R$, $\delta \in (0, 1)$, and sufficiently small constant $\eps > 0$, we can compute an $\eps$-multiplicative approximation to $\Tr\exp(-\mm)$ with probability $\ge 1 - \delta$ in time
\[O\Par{\tmv(\mm) \cdot \Par{\sqrt{\kappa R} + R} \log\Par{\frac d \delta}}.\]
\end{lemma}
\begin{proof}
First, with probability at least $1 - \delta$, choosing $k =O(\log \frac d \delta) $ in Fact~\ref{fact:jl} and taking a union bound guarantees that for all rows $j \in [d]$, we have 
\[\norm{\mq \Brack{\exp\Par{-\half \mm}}_{j:}}_2^2 \text{ is a } \frac \eps 3\text{-multiplicative approximation of } \norm{\Brack{\exp\Par{-\half \mm}}_{j:}}_2^2.\]
Condition on this event in the remainder of the proof. Since
\[\Tr\exp(-\mm) = \sum_{j \in [d]} \norm{\Brack{\exp\Par{-\half \mm}}_{j:}}_2^2,\]
and the following equalities hold
\begin{align*}
\sum_{j \in [d]} \norm{\mq \Brack{\exp\Par{-\half \mm}}_{j:}}_2^2 &= \Tr\Par{\exp\Par{-\half \mm} \mq^\top \mq \exp\Par{-\half \mm}} \\
&= \Tr\Par{\mq \exp\Par{-\mm} \mq^\top} = \sum_{\ell \in [k]} \norm{\exp\Par{-\half \mm} \mq_{\ell:}}_2^2,
\end{align*}
it follows that it suffices to obtain a $\frac \eps 3$-multiplicative approximation to the last sum in the above display. Since $\Tr\exp(-\mm) \ge \exp(-R)$ by the assumption on $\lmin(\mm)$, it then suffices to approximate each term $\norm{\exp(-\half \mm) \mq_{\ell:}}_2^2$ to an additive $\frac{\eps}{3k}\exp(-R)$. For simplicity, fix some $\ell \in [k]$ and denote $q \defeq \mq_{\ell:}$; recall $\norm{q}_2^2 = \frac 1 k$ from the definition of $\mq$ in Fact~\ref{fact:jl}.

By rescaling, it suffices to demonstrate that on any unit vector $q \in \R^d$, we can approximate $\norm{\exp(-\half \mm)q}_2^2$ to an additive $\frac \eps 3 \exp(-R)$. To this end, we note that (after shifting the definition of $R$ by a constant) Corollary~\ref{cor:approxexp} provides a matrix $\mpack$ with $\tmv(\mpack) = O(\tmv(\mm) \cdot (\sqrt{\kappa R}+ R))$ and
\[\exp\Par{-\mm} - \frac \eps 3 \exp\Par{-R}\id \preceq \mpack \preceq \exp\Par{-\mm} + \frac \eps 3\exp\Par{-R}\id,\]
which exactly meets our requirements by taking quadratic forms. By setting $\kappa \gets \kappa R$ and adjusting the definition of $R$ by a constant in Corollary~\ref{cor:approxexp}, the runtime follows from the cost of applying $\mpack$ to each of the $k = O(\log \frac d \delta)$ rows of $\mq$.
\end{proof}

We next handle the required approximation of inner products $\inprod{a_ia_i^\top}{\exp(-\mm)}$.

\begin{lemma}\label{lem:line5left}
Given $\mm \in \Sym_{\ge 0}^d$ and $\kappa$ with $\mm \preceq \kappa\id$, $c > 1$, $\delta \in (0, 1)$, and sufficiently small constant $\eps$, we can compute $(\eps, \exp(-c))$-approximations to all
\[\Brace{\inprod{a_ia_i^\top}{\exp(-\mm)}}_{i \in [n]} \text{ where } \Brace{a_i}_{i \in [n]} \text{ are rows of } \ma \in \R^{n \times d} \text{ and } \norm{a_i}_2 = 1 \text{ for all } i \in [d],\]
with probability $\ge 1 - \delta$ in time
\[O\Par{\Par{\tmv(\mm) \cdot \Par{\sqrt{\kappa c} + c} + \nnz(\ma)} \log \frac n \delta}.\]
\end{lemma}
\begin{proof}
As in the proof of Lemma~\ref{lem:trexpnegm}, by taking a union bound it suffices to sample a $\mq \in \R^{k \times d}$ for $k = O(\log \frac n \delta)$ and instead compute all $\norm{\mq \exp(-\half \mm) a_i}_2^2$ to additive error $\exp(-c)$. By appropriately renormalizing by $k$, letting $q$ be some row of $\mq$, it suffices to show instead how to compute for an arbitrary unit vector $q \in \R^d$, the quantity $\inprod{q}{\exp(-\half \mm)a_i}^2$ to additive error $\exp(-c)$.

To this end, consider the use of the approximation $\inprod{q}{\mpack a_i}^2$ for some $\mpack$ with $-\frac 1 3 \exp(-c) \id \preceq \mpack - \exp(-\half \mm) \preceq \frac 1 3 \exp(-c) \id$. Letting the difference matrix be $\md \defeq \mpack - \exp(-\half \mm)$, we compute
\begin{align*}
\Par{q^\top \exp\Par{-\half \mm} a_i}^2 - \Par{q^\top \mpack a_i}^2 &= 2\Par{q^\top \exp\Par{-\half \mm} a_i}\Par{q^\top \md a_i} + \Par{q^\top \md a_i}^2\\
&\le 2\norm{\md}_2 + \norm{\md}_2^2 \le \exp(-c).
\end{align*}
We used $q$ and $a_i$ are unit vectors, $\exp(-\half \mm) \preceq \id$, and $\norm{\md}_2 \le \frac 1 3 \exp(-c) \le 1$. Hence, $\inprod{q}{\mpack a_i}^2$ is a valid approximation. The requisite $\mpack$ is given by Corollary~\ref{cor:approxexp} with $\tmv(\mpack) = O(\tmv(\mm) \cdot (\sqrt{\kappa c} + c))$.
The runtime follows from applying $\mpack$ to rows of $\mq$ to form $\widetilde{\mq}$, and then computing all $\|\widetilde{\mq} a_i\|_2^2$.
\end{proof}

\subsubsection{Putting it all together}

By combining Lemma~\ref{lem:line5left} with Corollary~\ref{cor:algcostmpc}, we have the following guarantee on the cost of running Algorithm~\ref{alg:decidempc} for a given value of $\kappa$ with the matrices defined in \eqref{eq:leftmats}.

\begin{lemma}\label{lem:alg1left}
The cost of running Algorithm~\ref{alg:decidempc} for some $\kappa > 0$, sufficiently small constant $\eps$, and the matrices defined in \eqref{eq:leftmats} is
\[O\Par{\nnz(\ma)\cdot \kappa^{1.5} \log^{2}(d)\log^2\Par{\frac{n\kappa}{\delta}}\log\log\kappa }.\]
\end{lemma}
\begin{proof}
The cost of all lines other than Line 6 and 13 can clearly be seen to fit in the runtime budget, by Corollary~\ref{cor:algcostmpc} (note that we do not explicitly compute the matrices $\mg_t$, $\ms_t$, or $\my_t$ in any iteration). 

To bound the cost of Line 13, it suffices to use Lemma~\ref{lem:approxlmin} with $\kappa \gets O(\frac{\kappa \log d}{\eps})$ and $R \gets \frac{\log d}{\eps}$. To see this is valid, this $R$ value is the additive approximation required by Line 13, and the largest eigenvalue of $-\ms_{t + 1}$ is bounded by $\eta \kappa T = O(\frac{\kappa \log d}{\eps})$ by the guarantees of $\algpack$.

To bound the cost of Line 6, we use Lemmas~\ref{lem:trexpnegm} and~\ref{lem:line5left}. First, note that at all times Line 6 is run, it suffices to parameterize $R \gets \frac{13\log d}{\eps}$ and $\kappa \gets O(\kappa)$ in Lemma~\ref{lem:trexpnegm}; the former guarantee follows from the algorithm not terminating on Line 15, and the latter follows from our earlier eigenvalue bound. We then compute a $\frac \eps {30}$-multiplicative approximation to the denominator $\Tr\exp(\ms_{t + 1})$ of all inner products required in Line 6 within the designated time budget.

Finally, we claim it suffices to parameterize Lemma~\ref{lem:line5left} with $\eps \gets \frac{\eps}{30}$ and $c \gets R + \log\frac{10\kappa n}{\eps}$, where $R \gets \frac{13\log d}{\eps}$ as before. To see this, the composition of two $\frac \eps {30}$-multiplicative approximations (one due to the error of estimating the denominator) is a $\frac \eps {10}$-multiplicative approximation, and since the denominator is at least $\exp(-R)$, the additive error can only be blown up by a factor of $\exp(R)$. For the given setting of $c$, this amounts to an overall $\frac{\eps}{10\kappa n}$-additive approximation (after normalization), as desired. The runtime of Lemma~\ref{lem:line5left} can again be seen to fit in the designated budget.
\end{proof}

Finally, by implementing an incremental search using Lemma~\ref{lem:alg1left}, we have our main result.

\begin{theorem}\label{thm:leftslow}
There is an algorithm, which given full-rank $\ma \in \R^{n \times d}$ for $n \ge d$ computes $w \in \R^n_{\ge 0}$ such that $\kappa(\ma^\top \mw \ma) = (1 + \eps)\ksl(\ma)$ for arbitrarily small $\eps = \Theta(1)$, with probability $\ge 1 - \delta$ in time
\[O\Par{\nnz(\ma)\cdot \Par{\ksl(\ma)}^{1.5} \cdot \log^{2}(d)\log^2\Par{\frac{n\ksl(\ma)}{\delta}}\log\log\Par{\ksl(\ma)}}.\]
\end{theorem}
\begin{proof}
Throughout let $\ksl \defeq \ksl(\ma)$ for notational simplicity. For a given value of $\kappa \ge \frac 1 {1 - \eps} \ksl$, it is clear from the definition of $\ksl$ that \eqref{eq:feasiblempc} is feasible and the algorithm will return ``yes'' with a weighting yielding a condition number at most a $\frac{1 + \eps}{1 - \eps}$ factor worse than $\ksl$. By starting with the guess $\kappa = 1$ and incrementing by factors of $1 + \eps$ each time, the algorithm will clearly terminate on $\kappa = (1 + O(\eps))\ksl$ in $O(\log \ksl)$ calls. Note that this strategy does not add a $\log \ksl$ overhead to the runtime, because it exhibits geometric decay and hence only has a constant overhead. Since $O(\ksl)$ is an upper bound on all $\kappa$ that Algorithm~\ref{alg:decidempc} uses, the claim follows from Lemma~\ref{lem:alg1left}.
\end{proof}

\subsection{Constant-factor optimal outer scaling with a factorization}\label{ssec:ksrslow}

In this section, we show how to modify the strategy of Section~\ref{ssec:kslslow} to handle outer scalings. Throughout, let $\ma \in \R^{n \times d}$ with $n \ge d$ be full-rank, and define a matrix $\mb \in \R^{n \times n}$ as follows: let its first $d$ rows be the columns of $\ma$, and let its last $n - d$ rows be any basis for the orthogonal complement of the column span of $\ma$, such that $\mb$ is non-singular. We assume without loss of generality (by scale invariance) that the columns of $\ma$ have been scaled to have unit norm.

\begin{lemma}\label{lem:bsuffices}
We have $\ksl(\mb) = \ksr(\ma^\top\ma)$. Moreover, for any $w \in \R^n_{\ge 0}$, letting $\tw \in \R^d_{\ge 0}$ keep its first $d$ entries, we have that $\kappa(\tmw^\half \ma^\top \ma \tmw^\half) \le \kappa(\mb^\top \mw \mb)$.
\end{lemma}
\begin{proof}
We first prove the second statement. Since the eigenvalues of $\ma \tmw \ma^\top$ are the same as those of $\tmw^\half \ma^\top \ma \tmw^\half$ with the addition of $0$ as an eigenvalue with multiplicity $n - d$, and the nonzero eigenvalues of $\ma \tmw \ma^\top$ are a subset of the eigenvalues of $\mb^\top \mw \mb$ since the rows of $\mb$ separate into two orthogonal subspaces of $\R^n$, this yields the claim.

The second statement clearly implies $\ksl(\mb) \ge \ksr(\ma^\top\ma)$, so it remains to show $\ksl(\mb) \le \ksr(\ma^\top\ma)$. Given $\tw \in \R^d_{\ge 0}$, it suffices to show how to extend $\tw$ to a vector $w \in \R^n_{\ge 0}$ such that $\kappa(\mb^\top \mw \mb) \le \kappa(\tmw^\half \ma^\top \ma \tmw^\half)$. To this end, let $\lam \defeq \lmin(\tmw^\half \ma^\top \ma \tmw^\half)$, and consider the $w$ which has last $n - d$ coordinates equal to $\lam$, and first $d$ coordinates equal to $\tw$. The eigenvalues of $\mb^\top \mw \mb$ are then all either $\lam$, or eigenvalues of $\tmw^\half \ma^\top \ma \tmw^\half$, yielding the claim.
\end{proof}

Lemma~\ref{lem:bsuffices} implies that to solve the optimal outer scaling problem for $\ma$, it suffices to solve the optimal inner scaling problem for $\mb$. However, in general it may be too expensive to write down the matrix $\mb$ and perform computations with it explicitly. In the remainder of this section, we show how to implement the steps of Algorithm~\ref{alg:decidempc} using only implicit access to $\mb$.

We define the following notation: for $j \in [d]$ let $a_j \defeq \aj$ (column $j$ of $\ma$), and for $i \in [m]$ for $m \defeq n -d$, let $u_i$ be the $(d + i)^{\text{th}}$ row of $\mb$, such that vertically concatenating $\{a_j\}_{j \in [d]}$ and $\{u_i\}_{i \in [m]}$ forms $\mb$. Finally, for $i \in [n]$ in this section we let
\begin{equation}\label{eq:rightmats}\ma_i \defeq \mb_{i:}\mb_{i:}^\top.\end{equation}
We begin by discussing the implicit implementation of Line 7 of Algorithm~\ref{alg:decidempc}.

\begin{lemma}\label{lem:line6right}
Given a vector $v$, to compute a $\alg(v)$ satisfying the requirements of Line 7 of Algorithm~\ref{alg:decidempc} using the matrices \eqref{eq:rightmats}, letting $\tv$ be the first $d$ coordinates of $v$, it suffices to evaluate $\tx \gets \alg(\tv)$ and return $x$ whose first $d$ coordinates are $\tx$, and whose last $n - d$ coordinates are $1$.
\end{lemma}
\begin{proof}
Recall that $\alg$ guarantees a $\frac \eps {10}$-multiplicative approximation to
\[\opt(v) \defeq \max_{\substack{\sum_{i \in [n]} w_i \ma_i \preceq \id \\ w \in \R^n_{\ge 0}}} v^\top w,\]
as well as a satisfying $w$. Because the spectral constraint $\sum_{i \in [n]} w_i \ma_i \preceq \id$ for the matrices \eqref{eq:rightmats} decomposes into a constraint on the span of $\ma$ and a constraint on its complement, it suffices to provide $\frac \eps {10}$-multiplicative approximations to
\[\max_{\substack{\sum_{i \in [d]} w_i \ma_i \preceq \id \\ w \in \R^d_{\ge 0}}} \tv^\top w \text{ and } \max_{\substack{\sum_{i \in [n] \setminus [d]} w_i \ma_i \preceq \id \\ w \in \R^{n - d}_{\ge 0}}} (v')^\top w,\]
where $v'$ is the last $n - d$ coordinates of $v$. By definition of $\tx$, it is a $\frac \eps {10}$-multiplicative approximation to the former problem. By nonnegativity of $v'$ and since the last $n - d$ rows of $\mb$ are an orthonormal basis, the all-ones vector solves the second problem optimally. Concatenating yields the result.
\end{proof}

Given the result of Lemma~\ref{lem:line6right}, if we choose to implement Line 7 of Algorithm~\ref{alg:decidempc} as described, in every iteration $t$ the last $n - d$ coordinates of $x_t$ are identical. By combining this invariant with Lemma~\ref{lem:line5left}, we give an implicit implementation of Lines 6 and 13 of Algorithm~\ref{alg:decidempc}.

\begin{lemma}\label{lem:line5right}
Suppose in some iteration, every $x_s$ for $0 \le s < t$ has its last $n - d$ coordinates identical and explicitly known. Then we can implement Lines 6 and 13 of Algorithm~\ref{alg:decidempc} in the same runtimes (up to constants) as given in Lemmas~\ref{lem:approxlmin},~\ref{lem:trexpnegm}, and~\ref{lem:line5left}.
\end{lemma}
\begin{proof}
Consider the matrix $\ms_t \defeq -\sum_{0 \le s < t} \eta \mg_s$ which defines each $\my_t$ by normalized negative exponentiation. Further, define the matrices in each iteration $0 \le s < t$,
\[\mg_s^{(1)} \defeq \kappa \sum_{i \in [d]} [x_s]_i \ma_i,\; \mg_s^{(2)} \defeq \kappa \sum_{i \in [n] \setminus [d]} [x_s]_i \ma_i = C \id_{\perp},\]
such that $\mg_s = \mg_s^{(1)}  + \mg_s^{(2)}$. Here, we let $\id_{\perp}$ be the projection matrix onto the orthogonal complement of the column span of $\ma$, and we used our invariant on the last coordinates of each $x_s$ to explicitly compute $C$. We can then analogously define
\[\ms_t^{(1)} \defeq -\sum_{0 \le s < t} \eta \mg_s^{(1)},\; \ms_t^{(2)} \defeq -\sum_{0 \le s < t} \eta \mg_s^{(2)} = C' \id_{\perp},\]
for some explicitly computable $C'$. Now, since $\exp$ preserves eigenspaces, for all $j \in [d]$,
\[a_j^\top \exp(\ms_t) a_j = a_j^\top \exp\Par{\ms_t^{(1)}} a_j,\]
which we can use Lemma~\ref{lem:line5left} to approximate since we have explicit access to all $\ma_j$ for $j \in [d]$. Moreover, for all $i \in [m]$,
\[u_i^\top \exp\Par{\ms_t} u_i = u_i^\top \exp\Par{C' \id_{\perp}} u_i = \exp(C'),\]
by the same logic. Next, to compute $\Tr\exp(\ms_t)$, by separability of eigenspaces it is
\[\Tr\exp\Par{\ms_t^{(1)}} + \Tr\exp\Par{\ms_t^{(2)}} - n.\]
Here we accounted for double-counting of zero eigenvalues. Lemma~\ref{lem:line5left} approximates the first summand, and we can explicitly compute the second summand as $(n - d) \exp(C')$. None of these computations is the runtime bottleneck, yielding the claim. This concludes implementing Line 6.

Finally, to implement Line 13 it suffices to run Lemma~\ref{lem:approxlmin} on just the matrix $\ms_t^{(1)}$, which we have explicit access to (making sure to project into the column span of $\ma$ when implementing the power method), and then check the value of $-C'$, since these are all the eigenvalues of $-\ms_t$.
\end{proof}

We combine these pieces to give our main outer scaling result under a factorization.

\begin{theorem}\label{thm:rightslow}
There is an algorithm, which given full-rank $\ma \in \R^{n \times d}$ for $n \ge d$ computes $w \in \R^n_{\ge 0}$ such that $\kappa(\mw^\half \ma^\top \ma \mw^\half) = (1 + \eps) \ksr(\ma^\top \ma)$ for arbitrarily small $\eps = \Theta(1)$, with probability $\ge 1 - \delta$ in time (where $\mk \defeq \ma^\top\ma$)
\[O\Par{\nnz(\ma)\cdot \Par{\ksr(\mk)}^{1.5} \cdot \log^{2}(d)\log^2\Par{\frac{n\ksr(\mk)}{\delta}}\log\log\Par{\ksr(\mk)}}.\]
\end{theorem}
\begin{proof}
By Lemma~\ref{lem:bsuffices}, it suffices to compute a near-optimal inner scaling of $\mb$ and then truncate to the first $d$ coordinates, which Theorem~\ref{thm:leftslow} accomplishes. To give the runtime, applying Lemma~\ref{lem:line6right} inductively satisfies the assumption of Lemma~\ref{lem:line5right}. The runtime then follows by combining Lemmas~\ref{lem:line6right} and~\ref{lem:line5right}, which reduces the computation to be bottlenecked by the runtime stated in Theorem~\ref{thm:leftslow}.
\end{proof} 	%

\section{Constant-factor optimal outer scalings without a factorization}
\label{sec:kernel}

In this section, we provide a method for computing an outer scaling which approximately obtains the optimal preconditioning quality $\ksk(\mk)$, for a matrix $\mk \in \PD^d$. To outline our approach, let $\ma \in \PD^d$ denote the symmetric square root of $\mk$ (which we recall we cannot explicitly access). At a high level, the difficulty is in efficiently simulating access to the rows of $\ma$, because if we had $\ma$ explicitly we could apply the algorithm of Section~\ref{ssec:ksrslow} (indeed, the additional complication of ``completing the basis'' is not even needed, as $\ma$ is square). However, to simulate access to $\ma$ requires efficient approximations to the square root, which in turn depend on the conditioning of $\mk$ (cf.\ Fact~\ref{fact:poly_approx_sqrt}) --- this is exactly the quality of $\mk$ which we aimed to improve!

We give a recursive approach to breaking this chicken-and-egg problem, inspired by similar approaches used in the literature, e.g.\ \cite{LiMP13, KapralovLMMS14, CohenLMMPS15}. In particular, the final algorithm we give in Section~\ref{ssec:homotopy} for approximating $\ksk(\mk)$ proceeds in logarithmically many \emph{phases}, each of which computes a reweighting approximating $\ksk(\mk + \lam \id)$ for a different value of $\lam$. By repeatedly halving $\lam$, we can use the reweighting for $\mk + \lam \id$ as a reweighting for the next phase $\mk + \frac \lam 2 \id$, incurring only a slight loss in condition number. We can then approximate the square root of the (reweighted) $\mk + \frac \lam 2 \id$, allowing us to efficiently implement Algorithm~\ref{alg:decidempc} for the next phase. The key technical observation underpinning this ``homotopy method'' is the following, which states that adding a multiple of the identity cannot increase the value of $\ksk$.

\begin{lemma}\label{lem:idcanthurt}
		For any matrix $\mk \in \PD^d$ and $\lam \ge 0$, $\ksk(\mk + \lam \id) \le \ksk(\mk)$.
	\end{lemma}
	\begin{proof}
		By scaling $\mk$ by $\lam$ appropriately (since $\ksk$ is invariant under scalar multiplication), it suffices to take $\lam = 1$. The definition of $\ksk$ implies there exists a diagonal matrix $\mw$ such that
		\begin{equation}\label{eq:equivdiag}\id \preceq \mw^\half \mk \mw^\half \preceq \ksk(\mk) \id \iff \mw^{-1} \preceq \mk \preceq \ksk(\mk) \mw^{-1}.\end{equation}
		Thus, to demonstrate $\ksk(\mk + \id) \le \ksk(\mk)$ it suffices to exhibit a diagonal $\tmw$ such that
		\[\tmw \preceq \mk + \id \preceq \ksk(\mk) \tmw. \]
		We choose $\tmw = \mw^{-1} + \id$; then, the above display follows from \eqref{eq:equivdiag} and $\id \preceq \id \preceq \ksk(\mk) \id$.
	\end{proof}

In Section~\ref{ssec:implsqrt}, we demonstrate how to implement the steps of Algorithm~\ref{alg:decidempc} with $\{\ma_i = a_ia_i^\top\}_{i \in [d]}$, where $\{a_i\}_{i \in [d]}$ are rows of an unknown matrix $\ma$, and $\mk = \ma^2$ has bounded condition number and is explicitly given. We then use this primitive to implement our homotopy method in Section~\ref{ssec:homotopy}.

\subsection{Matrix exponential operations with implicit square root access}\label{ssec:implsqrt}

In this section, we demonstrate that we can perform the operations required by Lines 6, 7, and 13 of Algorithm~\ref{alg:decidempc} using implicit square root access with multiplicative accuracy. Specifically, we will repeated use the following corollary of Fact~\ref{fact:poly_approx_sqrt} to simulate square root access.

\begin{corollary}\label{cor:implsqrt}
For any vector $b \in \R^d$, $\delta, \eps \in (0, 1)$, and $\mm \in \PD^d$ with $\kappa(\mm) \le \kscale$, with probability $\ge 1 - \delta$ we can compute $u \in \R^d$ such that
\[\norm{u - \mm^\half b}_2 \le \eps \norm{\mm^\half b}_2 \text{ in time } O\Par{\tmv\Par{\mm} \cdot \Par{\sqrt{\kscale} \log \frac{\kscale}{\eps} + \log \frac d \delta}}.\]
\end{corollary}
\begin{proof}
First, we compute a $2$-approximation to $\mu$ in Fact~\ref{fact:poly_approx_sqrt} within the runtime budget using the power method (Fact~\ref{fact:powermethod}), since $\kscale$ is given. This will only affect parameters in the remainder of the proof by constant factors. If $u = \mpack b$ for commuting $\mpack$ and $\mm$, our requirement is equivalent to
\[-\eps^2 \mm \preceq \Par{\mpack - \mm^\half}^2 \preceq \eps^2 \mm.\]
Since square roots are operator monotone (by the L\"owner-Heinz inequality), this is true iff
\[-\eps \mm^\half \preceq \mpack - \mm^\half \preceq \eps \mm^\half, \]
and such a $\mpack$ which is applicable within the runtime budget is given by Fact~\ref{fact:poly_approx_sqrt}.
\end{proof}

We next demonstrate two applications of Corollary~\ref{cor:implsqrt} in estimating applications of products involving $\ma$. We will use the following fact, whose proof is deferred to Appendix~\ref{app:kernel}.

\begin{restatable}{lemma}{restateopnormprod}\label{lem:opnormprod}
	Let $\mb \in \R^{d \times d}$ and let $\ma \in \PD^d$. Then $\min\Par{\norm{\ma\mb}_2, \norm{\mb\ma}_2} \ge \frac{1}{\kappa(\ma)}\norm{\mb}_2 \norm{\ma}_2$.
\end{restatable}

First, we discuss the application of a polynomial in $\ma \mw \ma$ to a random vector.

\begin{lemma}\label{lem:applyana}
Let $u \in \R^d$ be a uniformly random unit vector, let $\mk \in \PD^d$ such that $\ma \defeq \mk^\half$ and $\kappa(\mk) \le \kscale$, and let $\mpack$ be a degree-$\Delta$ polynomial in $\ma \mw \ma$ for some explicit diagonal matrix $\mw$. For $\delta, \eps \in (0, 1)$, with probability $\ge 1 - \delta$ we can compute $w \in \R^d$ so $\norm{w - \mpack u}_2 \le \eps \norm{\mpack u}_2$ in time
\[O\Par{\tmv(\mk) \cdot \Par{\Delta + \sqrt{\kscale} \log \frac{d\kscale}{\delta \eps}}}.\]
\end{lemma}
\begin{proof}
We can write $\mpack = \ma \mn \ma$ for some explicit matrix $\mn$ which is a degree-$O(\Delta)$ polynomial in $\mk$ and $\mw$, which we have explicit access to. Standard concentration bounds show that with probability at least $1 - \delta$, for some $N = \text{poly}(d, \delta^{-1})$, $\norm{\mpack u}_2 \ge \frac{1}{N}\norm{\mpack}_2$.
Condition on this event for the remainder of the proof, such that it suffices to obtain additive accuracy $\frac \eps N \norm{\mpack}_2$. By two applications of Lemma~\ref{lem:opnormprod}, we have
\begin{equation}\label{eq:anaop}
\norm{\ma \mn \ma}_2 \ge \frac 1 {\kscale} \norm{\ma}_2^2 \norm{\mn}_2.
\end{equation}
Our algorithm is as follows: for $\eps' \gets \frac{\eps}{3N\kscale}$, compute $v$ such that $\norm{v - \ma u}_2 \le \eps' \norm{\ma}_2 \norm{u}_2$ using Corollary~\ref{cor:implsqrt}, explicitly apply $\mn$, and then compute $w$ such that $\norm{w - \ma \mn v}_2 \le \eps' \norm{\ma}_2 \norm{\mn v}_2$; the runtime of this algorithm clearly fits in the runtime budget. The desired approximation is via
\begin{align*}
\norm{w - \ma \mn \ma u}_2 &\le \norm{w - \ma \mn v}_2 + \norm{\ma \mn v - \ma \mn \ma u}_2 \\
&\le \eps' \norm{\ma}_2 \norm{\mn}_2 \norm{v}_2 + \norm{\ma \mn v - \ma \mn \ma u}_2 \\
&\le 2\eps' \norm{\ma}_2^2 \norm{\mn}_2 + \norm{\ma}_2 \norm{\mn}_2 \norm{v - \ma u}_2 \\
&\le 2\eps' \norm{\ma}_2^2 \norm{\mn}_2 + \eps' \norm{\ma}_2^2 \norm{\mn}_2 \\
&\le 3\eps' \kscale \norm{\ma \mn \ma}_2 = \frac{\eps}{N}\norm{\mpack}_2.
\end{align*}
The third inequality used $\norm{v}_2 \le \norm{\ma u}_2 + \eps \norm{\ma}_2 \norm{u}_2 \le (1 + \eps)\norm{\ma}_2 \le 2\norm{\ma}_2$.
\end{proof}

We give a similar guarantee for random bilinear forms through $\ma$ involving an explicit vector.

\begin{lemma}\label{lem:applymaprod}
Let $u \in \R^d$ be a uniformly random unit vector, let $\mk \in \PD^d$ such that $\ma \defeq \mk^\half$ and $\kappa(\mk) \le \kscale$, and let $v \in \R^d$. For $\delta, \eps \in (0, 1)$, with probability $\ge 1 - \delta$ we can compute $w \in \R^d$ so $\inprod{w}{v}$ is an $\eps$-multiplicative approximation to $u^\top \ma v$ in time
\[O\Par{\tmv(\mk) \cdot \sqrt{\kscale}\log\frac{d\kscale}{\delta\eps}}.\]
\end{lemma}
\begin{proof}
As in Lemma~\ref{lem:applyana}, for some $N = \text{poly}(d, \delta^{-1})$ it suffices to give a $\frac{\eps}{N} \norm{\ma v}_2$-additive approximation. For $\eps' \gets \frac \eps {N\sqrt \kscale}$, we apply Corollary~\ref{cor:implsqrt} to obtain $w$ such that $\norm{w - \ma u}_2 \le \eps' \norm{\ma u}_2$, which fits within the runtime budget. Correctness follows from
\[
\Abs{\inprod{\ma u - w}{v}} \le \norm{\ma u - w}_2 \norm{v}_2 \le \eps' \norm{\ma}_2 \norm{v}_2 \le \eps' \sqrt{\kscale} \norm{\ma v}_2 \le \frac{\eps}{N} \norm{\ma v}_2.
\]
\end{proof}

\subsubsection{Estimating the smallest eigenvalue implicitly}

We begin by discussing implicit implementation of Line 13 of Algorithm~\ref{alg:decidempc}. Our strategy is to combine the approach of Lemma~\ref{lem:approxlmin} (applying the power method to the negative exponential), with Lemma~\ref{lem:applyana} since the power method involves products through random vectors. 

\begin{lemma}\label{lem:sqrtlmin}
Given $R > 1$, $\delta \in (0, 1)$, $\mk \in \PD^d$ such that $\ma \defeq \mk^\half$ and $\kappa(\mk) \le \kscale$, and diagonal $\mw \in \PSD^d$ such that $\mm \defeq \ma \mw \ma \preceq \kappa\id$, we can compute a $R$-additive approximation to $\lam_{\min}(\mm)$ with probability $\ge 1 - \delta$ in time
\[O\Par{\tmv(\mk) \cdot \sqrt{\kappa + \kscale} \cdot R\log\frac{d\kscale}{\delta}}.\]
\end{lemma}
\begin{proof}
The proof of Lemma~\ref{lem:approxlmin} implies it suffices to compute a $0.2$-multiplicative approximation to the largest eigenvalue of $\mpack$, a degree-$\Delta = O(\sqrt{\kappa R} + R)$ polynomial in $\mm$. Moreover, letting $\Delta' = O(\log \frac{n}{\delta})$ be the degree given by Fact~\ref{fact:powermethod} with $\delta \gets \frac \delta 3$, the statement of the algorithm in Fact~\ref{fact:powermethod} shows it suffices to compute for a uniformly random unit vector $u$,
\begin{align*}
\norm{\mpack^\Delta u}_2 \text{ and } \norm{\mpack^{\Delta + 1} u}_2 \text{ to multiplicative accuracy } \frac 1 {30}.
\end{align*}
We demonstrate how to compute $\norm{\mpack^\Delta u}_2$ to this multiplicative accuracy with probability at least $1 - \frac \delta 3$; the computation of $\norm{\mpack^{\Delta + 1} u}_2$ is identical, and the failure probability follows from a union bound over these three random events. Since $\mpack^\Delta$ is a degree-$O(\Delta\Delta') = O(\sqrt{\kappa}R \log \frac d \delta)$ polynomial in $\ma \mw \ma$, the conclusion follows from Lemma~\ref{lem:applyana}.
\end{proof}

\subsubsection{Estimating inner products with a negative matrix exponential implicitly}

We next implement Line 6, giving variants of Lemmas~\ref{lem:trexpnegm} and~\ref{lem:line5left} tolerating error of Corollary~\ref{cor:implsqrt}.

\begin{lemma}\label{lem:sqrttrexp}
Given $R > 1$, $\delta \in (0, 1)$, $\mk \in \PD^d$ such that $\ma \defeq \mk^\half$ and $\kappa(\mk) \le \kscale$, and diagonal $\mw \in \PSD^d$ such that $\mm \defeq \ma \mw \ma \preceq \kappa R\id$ and $\lmin(\mm) \le R$, for sufficiently small constant $\eps$ we can compute an $\eps$-multiplicative approximation to $\Tr\exp(-\mm)$ with probability $\ge 1 - \delta$ in time
\[O\Par{\tmv(\mk) \cdot \sqrt{\kappa + \kscale} \cdot R\log^2 \frac {d\kscale} \delta}.\]
\end{lemma}
\begin{proof}
The proof of Lemma~\ref{lem:trexpnegm} shows it suffices to compute $k = O(\log \frac d \delta)$ times, an $\frac \eps 3\exp(-R)$-additive approximation to $u^\top \mpack u$, for uniformly random unit vector $u$ and $\mpack$, a degree-$\Delta = O(\sqrt{\kappa R} + R)$-polynomial in $\mm$ with $\norm{\mpack}_2 \le \norm{\exp(-\mm)}_2 + \frac \eps 3 \exp(-R) \le \frac 4 3 \exp(-R)$. Applying Lemma~\ref{lem:applyana} with $\eps \gets \frac \eps 4$ to compute $w$, an approximation to $\mpack u$, the approximation follows from
\begin{align*}
\Abs{\inprod{w}{u} - \inprod{\mpack u}{u}} \le \norm{w - \mpack u}_2 \le \frac \eps 4 \norm{\mpack}_2 \le \frac \eps 3 \exp(-R).
\end{align*}
The runtime follows from the cost of applying Lemma~\ref{lem:applyana} to all $k$ random unit vectors.
\end{proof}

\begin{lemma}\label{lem:sqrtinprod}
Given $c > 1$, $\delta \in (0, 1)$, $\mk \in \PD^d$ such that $\ma \defeq \mk^\half$ and $\kappa(\mk) \le \kscale$, and diagonal $\mw \in \PSD^d$ such that $\mm \defeq \ma \mw \ma \preceq \kappa\id$, for sufficiently small constant $\eps$ we can compute $(\eps, \exp(-c))$-approximations to all 
\[\Brace{\inprod{a_ia_i^\top}{\exp(-\mm)}}_{i \in [d]} \text{ where } \Brace{a_i}_{i \in [d]} \text{ are rows of } \ma \text{ and } \norm{a_i}_2 \le \rho \text{ for all } i \in [d],\]
with probability $\ge 1 - \delta$ in time
\[O\Par{\tmv(\mk) \cdot \sqrt{\kappa + \kscale} \cdot \Par{c + \log \rho} \log^2 \frac {d\kscale} \delta}.\]
\end{lemma}
\begin{proof}
The proof of Lemma~\ref{lem:line5left} implies it suffices to compute $k = O(\log \frac d \delta)$ times, for each $i \in [d]$, the quantity $\inprod{u}{\mpack a_i}$ to multiplicative error $\frac \eps 2$, for uniformly random unit vector $u$ and $\mpack$, a degree-$\Delta = O(\sqrt{\kappa (c +\log \rho)} +(c \log \rho))$-polynomial in $\mm$; we remark we gain an additive $\log \rho$ in the degree to account for the scale of $a_ia_i^\top$. Next, note that since $a_i = \ma e_i$ and $\mpack = \ma \mn \ma$ for $\mn$ an explicit degree-$O(\Delta)$ polynomial in $\mk$ and $\mw$, we have $\inprod{u}{\mpack a_i} = u^\top \ma \Par{\mn \mk e_i}$. We can approximate this by some $\inprod{w}{\mn \mk e_i}$ via Lemma~\ref{lem:applymaprod} to the desired accuracy. The runtime comes from applying Lemma~\ref{lem:applymaprod} $k$ times, multiplying each of the resulting vectors $w$ by $\mk \mn$ and stacking them to form a $k \times d$ matrix $\tmq$, and then computing all $\|\tmq e_i\|_2$ for $i \in [d]$.
\end{proof}

\subsubsection{Implementing a packing oracle implicitly}

Finally, we discuss implementation of Line 7 of Algorithm~\ref{alg:decidempc}. In particular, letting $\mk$ be a matrix with symmetric square root $\ma$ (with rows $a_i \defeq \ai$), for the matrices $\ma_i = a_i a_i^\top$ the requirement of Line 7 is a multiplicative approximation (and a witnessing reweighting) to the optimization problem
\[\max_{\substack{\sum_{i \in [d]} w_i \ma_i \preceq \id \\ w \in \R^d_{\ge 0}}} v^\top w.\]
Here, $v$ is explicitly given by an implementation of Line 6 of the algorithm, but we do not have $\{\ma_i\}_{i \in [d]}$ explicitly. To implement this step implicitly, we recall the approximation requirements of the solver of Proposition~\ref{prop:optv}, as stated in \cite{Jambulapati0T20}. We remark that the approximation tolerance is stated for the decision problem tester of \cite{Jambulapati0T20} (Proposition~\ref{prop:decisionpack}); once the tester is implicitly implemented, the same reduction as described in Appendix~\ref{app:slowoptimal} yields an analog to Proposition~\ref{prop:optv}.

\begin{corollary}[Approximation tolerance of Proposition~\ref{prop:optv}, Theorem 5, \cite{Jambulapati0T20}]\label{cor:approxpack}
Let $\eps$ be a sufficiently small constant. The runtime of Proposition~\ref{prop:optv} is due to $T = O(\log(d(\delta)^{-1})\log d \cdot \log\log\frac{\opt_+}{\opt_-})$ iterations, each of which requires $O(1)$ vector operations and $O(\eps)$-multiplicative approximations to
\begin{equation}\label{eq:approxpack}\Tr\Par{\mm^p},\; \Brace{\inprod{\ma_i}{\mm^{p - 1}}}_{i \in [d]} \text{ for } \mm \defeq \sum_{i \in [d]} w_i \ma_i \text{ for an explicitly given } w \in \R^d_{\ge 0},\end{equation}
where $p = O(\log d\cdot \eps^{-1})$ is an odd integer, and $S\id \preceq \mm \preceq R\id$, for $R = O(\log d \cdot \eps^{-1})$ and $S = \textup{poly}(\frac \eps {nd}, \kappa((\sum_{i \in [n]} \ma_i))^{-1})$.
\end{corollary}

We remark that the lower bound $S$ comes from the fact that the initial matrix of the \cite{Jambulapati0T20} solver is a bounded scaling of $\sum_{i \in [n]} \ma_i$, and the iterate matrices are monotone in Loewner order. We now demonstrate how to use Lemmas~\ref{lem:applyana} and~\ref{lem:applymaprod} to approximate all quantities in \eqref{eq:approxpack}. Throughout the following discussion, we specialize to the case where each $\ma_i = a_ia_i^\top$, so $\mm$ in \eqref{eq:approxpack} will always have the form $\mm = \ma \mw \ma$ for diagonal $\mw \in \PSD^d$. We also always have $n = d$.

\begin{lemma}\label{lem:packdenom}
Given $R > 1$, $\delta \in (0, 1)$, $\mk \in \PD^d$ such that $\ma \defeq \mk^\half$ and $\kappa(\mk) \le \kscale$, and diagonal $\mw \in \PSD^d$ such that $S \id \preceq \mm \defeq \ma \mw \ma \preceq R\id$ where $S = \textup{poly}((d\kscale)^{-1})$, for sufficiently small constant $\eps$ we can compute an $\eps$-multiplicative approximation to $\Tr (\mm^p)$ for integer $p$ in time
\[O\Par{\tmv(\mk) \cdot \Par{p + \sqrt{\kscale}\log\frac{d\kscale}{\delta}} \cdot \log \frac d \delta}.\]
\end{lemma}
\begin{proof}
As in Lemma~\ref{lem:sqrttrexp}, it suffices to compute $k = O(\log \frac d \delta)$ times, an $\frac \eps N S^p$-additive approximation to $u^\top \mm^p u$, for uniformly random unit vector $u$ and $N = \text{poly}(d, \delta^{-1})$. By applying Lemma~\ref{lem:applyana} with accuracy $\eps' \gets \frac{\eps S^p}{N R^p}$ to obtain $w$, an approximation to $\mm^p u$, we have the desired
\[\Abs{\inprod{u}{\mm^p u} - \inprod{u}{w}} \le \norm{\mm^p u - w}_2 \le \eps' \norm{\mm^p u}_2 \le \eps' R^p \le \frac{\eps}{N} S^p.\]
The runtime follows from $k$ applications of Lemma~\ref{lem:applyana} to the specified accuracy level.
\end{proof}

\begin{lemma}\label{lem:packnum}
Given $R > 1$, $\delta \in (0, 1)$, $\mk \in \PD^d$ such that $\ma \defeq \mk^\half$ and $\kscale(\mk) \le \kscale$, and diagonal $\mw \in \PSD^d$ such that $\mm \defeq \ma \mw \ma \preceq R\id$, for sufficiently small constant $\eps$ we can compute an $\eps$-multiplicative approximation to all
\[\Brace{\inprod{a_ia_i^\top}{\mm^{p - 1}}}_{i \in [d]} \text{ where } \{a_i\}_{i \in [d]} \text{ are rows of } \ma,\]
where $p$ is an odd integer, with probability $\ge 1 - \delta$ in time
\[O\Par{\tmv(\mk) \cdot \Par{p + \sqrt{\kscale} \log \frac{d\kscale}{\delta}} \cdot \log\frac d \delta}.\]
\end{lemma}
\begin{proof}
First, observe that for all $i \in [d]$ it is the case that
\[\inprod{a_ia_i^\top}{\mm^{p - 1}} = \Par{\ma e_i}^\top \mm^{p - 1} \Par{\ma e_i} \ge S^{p - 1} \norm{\ma}_{2}^2 \kscale^{-2}.\]
Letting $r = \half(p - 1)$ and following Lemma~\ref{lem:sqrtinprod} and the above calculation, it suffices to show how to compute $k = O(\log \frac d \delta)$ times, for each $i \in [d]$, the quantity $\inprod{u}{\mm^{r} a_i}$ to multiplicative error $\frac \eps 2$, for uniformly random unit vector $u$ and $N = \text{poly}(d, \delta^{-1})$. As in Lemma~\ref{lem:sqrtinprod}, each such inner product is $u^\top \ma (\mn \mk e_i)$ for $\mn$ an explicit degree-$O(p)$ polynomial in $\mk$ and $\mw$. The runtime follows from applying Lemma~\ref{lem:applymaprod} $k$ times and following the runtime analysis of Lemma~\ref{lem:sqrtinprod}.
\end{proof}

\subsection{Homotopy method}\label{ssec:homotopy}

In this section, we use the tools of Sections~\ref{ssec:implsqrt} in the context of Algorithm~\ref{alg:decidempc} to design a homotopy method for rescaling PD matrices. We first make two helper observations, whose proofs are deferred to Appendix~\ref{app:kernel}. The first shows it is trivial to compute a near-optimal scaling for $\mk + \lam \id$ when $\lam$ is sufficiently large, and small enough $\lam$ suffices to solve the original problem in $\mk$. 

\begin{restatable}{lemma}{restatelambounds}\label{lem:lambounds}
Let $\mk \in \PD^d$. Then, $\kappa(\mk + \frac{1}{\eps} \lmax(\mk)\id) \le 1 + \eps$. Moreover, given a diagonal $\mw \in \PSD^d$ such that $\kappa(\mw^\half(\mk + \lam \id)\mw^\half) \le \kscale$ for $0 \le \lam \le \frac {\eps \lmin(\mk)}{1 + \eps}$, $\kappa(\mw^\half \mk \mw^\half) \le (1 + \eps)\kscale$.
\end{restatable}

The second demonstrates that given a near-optimal preconditioner for $\mk + \lam\id$, applying the same preconditioner to $\mk + \half \lam \id$ yields a condition number at most twice as large.

\begin{restatable}{lemma}{restatebetweenphase}\label{lem:betweenphase}
Let $\mk \in \PD^d$, and let $\mw \in \PSD^d$ be diagonal. Then for any $\lam > 0$,
\[\kappa\Par{\mw^\half \Par{\mk + \lam \id}\mw^\half} \le 2 \kappa\Par{\mw^\half \Par{\mk +\frac \lam 2 \id}\mw^\half}.\]
\end{restatable}

Finally, we give our main result for approximating $\ksk(\mk)$.

\begin{theorem}\label{thm:slowsqrt}
There is an algorithm, which given $\mk \in \PD^d$ computes $w \in \R^d_{\ge 0}$ such that $\kappa(\mw^\half \mk \mw^\half) = (1 + \eps)\ksk(\ma)$ for arbitrarily small $\eps = \Theta(1)$, with probability $\ge 1- \delta$ in time 
\[O\Par{\tmv(\mk) \cdot \Par{\ksk(\mk)}^{1.5} \cdot \log^{3}\Par{\frac{d\ksk(\mk)}{\delta}}\log^2(d)\log\Par{\ksk(\mk)}\log\log\Par{\ksk(\mk)} }.\]
\end{theorem}
\begin{proof}
Throughout this discussion, fix a sufficiently small constant $\eps$. First, given a value $\kappa \ge \frac{1}{1 - \eps} \ksk(\mk)$, we will discuss how to compute a rescaling $\mw$ such that $\kappa(\mw^\half \mk \mw^\half) \le (1 + O(\eps)) \kappa$. We then discuss how to incrementally search for the optimal value of $\kappa$ up to a constant factor. Finally, we assume that $\mk$ has been rescaled at the beginning of the algorithm so that its diagonal entries are all one, which by Proposition~\ref{prop:diagonesub} implies $\kappa(\mk) = O(\kappa^2)$.

\paragraph{Reduction to phases.} Our algorithm for searching for a rescaling with condition number $\le \kappa$ proceeds in $K = O(\log \kappa)$ phases. In particular, let $\lam_0$ be a $2$-approximation to $\frac 2 \eps \lam_{\max}(\mk)$, which we can compute with failure probability $\frac \delta 2$ within the given runtime budget using Fact~\ref{fact:powermethod}. Then, each phase $k \in [K]$ will compute a reweighting $\mw_k$ such that for $\lam_k \defeq \frac{\lam_0}{2^k}$,
\[\kappa\Par{\mw_k^\half\Par{\mk + \lam_k \id} \mw_k^\half} \le (1 + \eps) \kappa.\]
Lemma~\ref{lem:idcanthurt} implies this is always feasible. Given such a rescaling for phase $K$, Lemma~\ref{lem:lambounds} implies that such a rescaling is (up to constant factors in the $\eps$) also sufficient for rescaling $\mk$. Moreover, Lemma~\ref{lem:lambounds} also implies the identity rescaling $\mw_1 = \id$ suffices for the first phase. Finally, Lemma~\ref{lem:betweenphase} implies if we have succeeded in the $k^{\text{th}}$ phase for any $1 \le k \le K - 1$, we have access to a scaling with condition number $2(1 + \eps)\kappa \le 3 \kappa$ for the $(k + 1)^{\text{st}}$ phase. Hence, in phase $k$, redefining
\begin{equation}\label{eq:mkdefk}\mk \gets \mw_k^\half (\mk + \lam_k \id) \mw_k^\half,\end{equation}
we have reduced to the following self-contained problem for all phases $2 \le k \le K$: given $\kappa \ge 0$, $\eps \in (0, 1)$ such that $\ksk(\mk) \le (1 - \eps) \kappa$ and
 \begin{equation}\label{eq:mkprettygood}\kappa\Par{\mk} \le \kscale \defeq 3\kappa,\end{equation}
find a rescaling $\mw$ such that
\begin{equation}\label{eq:kappabound}\kappa\Par{\mw^\half \mk \mw^\half} \le (1 + \eps)\kappa.\end{equation}
Finally, we remark that in the $k^{\text{th}}$ phase and overloading $\mk$ to be defined as in \eqref{eq:mkdefk}, letting $\mk = \ma^2$ for $\ma \in \PD^d$, all rows $\{a_i\}_{i \in [d]}$ have $\norm{a_i}_2 \in [1, \rho]$ for $\rho \defeq O(\kappa^{0.5})$ without loss of generality. This follows from $\norm{a_i}_2^2 = \mk_{ii}$, and since by assumption all eigenvalues of $\mk$ are in a multiplicative range of $\kscale$ (and hence so are all diagonal entries), so by scale invariance we assume they are in $[1, \rho^2]$.

\paragraph{Implementing a single phase.} We now implement a single phase, which solves the problem of computing a rescaling $\mw$ satisfying \eqref{eq:kappabound}, under the initial guarantee \eqref{eq:mkprettygood}. Per the above discussion, we assume all rows $\{a_i\}_{i \in [d]}$ of $\ma \defeq \mk^\half$ have $\ell_2$ norms in the range $[1, \rho \defeq O(\kappa^{0.5})]$.

Next, to compute the reweighting $\mw$ satisfying \eqref{eq:kappabound} we run Algorithm~\ref{alg:decidempc} on the matrices $\{\ma_i \defeq a_ia_i^\top\}_{i \in [d]}$. We note that implementing all of Lines 6, 7, and 13 of Algorithm~\ref{alg:decidempc} (the runtime bottleneck) are doable with access to only $\mk$ and a currently maintained reweighting, in time
\[O\Par{\tmv(\mk) \cdot \sqrt{\kappa} \cdot \log^{3}\Par{\frac{d\kappa}{\delta}}\log(d)\log\log\Par{\kappa} }.\]
To achieve this, we use the bounds derived in Section~\ref{ssec:implsqrt} with the following parameters.

\emph{Line 6.} First, we estimate the denominator of each $\my_t$ in Algorithm~\ref{alg:decidempc} using Lemma~\ref{lem:sqrttrexp} with $\kappa \gets O(\kappa)$, $\kscale \gets \kscale$, and $R \gets O(\log d)$. Next, we estimate all numerators of inner products through $\my_t$ using Lemma~\ref{lem:sqrtinprod} with $\kappa \gets O(\kappa \log d)$, $\kscale \gets \kscale$, $c \gets O(\log (\kappa d))$, and $\rho \gets O(\kappa^{0.5})$, which suffices after rescaling by the denominator (as demonstrated in Lemma~\ref{lem:alg1left}). 

\emph{Line 7.} By Corollary~\ref{cor:approxpack}, it suffices to give implementations of $O(\eps)$-multiplicative approximations to the quantities in \eqref{eq:approxpack}. We use Lemma~\ref{lem:packdenom} with $\kscale \gets \kscale$ and $p \gets O(\log d)$ to approximate the denominator, and then Lemma~\ref{lem:packnum} with the same parameters to approximate the numerators. There is an overhead of two logarithmic factors due to the iteration bound of Corollary~\ref{cor:approxpack}.

\emph{Line 13.} We use Lemma~\ref{lem:sqrtlmin} with $R \gets O(\log d)$, $\kscale \gets \kscale$, and $\kappa \gets O(\kappa \log d)$. 

Altogether, Lines 6-13 require at most $O(\kappa \log d)$ calls to these respective procedures, so to obtain the failure probability it suffices to set $\delta \gets \frac{\delta}{O(\kappa \log d)}$ in their statements and take a union bound. By combining with the iteration bound of Algorithm~\ref{alg:decidempc} the overall cost of a phase is
\[O\Par{\tmv(\mk)\cdot \kappa^{1.5} \cdot \log^{3}\Par{\frac {d\kappa} \delta} \log^2(d)\log\log\Par{\kappa} }.\]

\paragraph{Cleaning up.} Finally, it is clear the cost of implementing all $K = O(\log \kappa)$ phases is the same as the above display with a multiplicative overhead of $K$. Here, we use that for any $\lam$, $\tmv(\mk + \lam \id) = O(1) \tmv(\mk)$. It remains to perform an incremental search for the value of $\kappa$; to do so, we follow the strategy of Theorem~\ref{thm:leftslow}, initalizing at $\kappa = 1$ and incrementing by factors of $1 + O(\eps)$. We will never run the algorithm with a value $\kappa > (1 + O(\eps))\ksk(\mk)$, and whenever any run of Algorithm~\ref{alg:decidempc} fails, we can safely conclude the current $\kappa$ is too small. If no run of Algorithm~\ref{alg:decidempc} fails in a given run, then we successfully compute the desired reweighting. As discussed in the proof of Theorem~\ref{thm:leftslow}, this incremental search only adds a constant overhead multiplicatively to the runtime.
\end{proof} 	%

\section{Faster constant-factor optimal scalings with a conjectured subroutine}
\label{sec:conjoptimal}

In this section, we demonstrate algorithms which achieve runtimes which scale as $\tO(\sqrt{\kappa^\star})$\footnote{Throughout this section for brevity, we use $\kappa^\star$ to interchangeably refer to the quantities $\ksl$ or $\ksr$ of a particular appropriate inner or outer rescaling problem.} matrix-vector multiplies for computing approximately optimal scalings, assuming the existence of a sufficiently general \emph{width-independent} mixed packing and covering (MPC) SDP solver. Such runtimes (which improve each of  Theorems~\ref{thm:leftslow},~\ref{thm:rightslow}, and~\ref{thm:slowsqrt} by roughly a $\kappa^\star$ factor) would nearly match the cost of the fastest solvers \emph{after} rescaling, e.g.\ conjugate gradient methods. We also demonstrate that we can achieve near-optimal algorithms for computing constant-factor optimal scalings for average-case notions of conditioning under this assumption. 

We first recall the definition of the general MPC SDP feasibility problem.

\begin{definition}[MPC feasibility problem]\label{def:mpc}
	Given sets of matrices $\{\mpack_i\}_{i \in [n]} \in \PSD^{d_p}$ and $\{\mcov_i\}_{i \in [n]} \in \PSD^{d_c}$, and error tolerance $\eps \in (0, 1)$, the mixed packing-covering (MPC) feasibility problem asks to return weights $w \in \R^n_{\ge 0}$ such that
	\begin{equation}\label{eq:feasible} \lmax\Par{\sum_{i \in [n]} w_i \mpack_i} \le (1 + \eps)\lmin\Par{\sum_{i \in [n]} w_i \mcov_i},\end{equation}
	or conclude that the following is infeasible for $w \in \R^n_{\ge 0}$:
	\begin{equation}\label{eq:infeasible}  \lmax\Par{\sum_{i \in [n]} w_i \mpack_i} \le \lmin\Par{\sum_{i \in [n]} w_i \mcov_i}.\end{equation}
	If both \eqref{eq:feasible} is feasible and \eqref{eq:infeasible} is infeasible, either answer is acceptable.
	\end{definition}
Throughout this section, we provide efficient algorithms under Assumption~\ref{assume:mpc}: namely, that there exists a solver for the MPC feasibility problem at constant $\eps$ with polylogarithmic iteration complexity and sufficient approximation tolerance. Such a solver would improve upon our algorithm in Section~\ref{ssec:mpcslow} both in generality (i.e.\ without the restriction that the constraint matrices are rank-one and multiples of each other) and in the number of iterations.

\begin{assumption}
	\label{assume:mpc}
	There is an algorithm $\MPC$ which takes inputs $\{\mpack_i\}_{i \in [n]} \in \PSD^{d_p}$, $\{\mcov_i\}_{i \in [n]} \in \PSD^{d_c}$, and error tolerance $\eps$, and solves problem \eqref{eq:feasible}, \eqref{eq:infeasible}, in $\poly(\log (n d \rho), \eps^{-1})$ iterations, where $d \defeq \max(d_p, d_c)$, $\rho \defeq \max_{i \in [n]} \tfrac{\lmax(\mcov_i)}{\lmax(\mpack_i)}$.
	Each iteration uses $O(1)$ $n$-dimensional vector operations, and for $\eps' = \Theta(\eps)$ with an appropriate constant, additionally requires computation of
	\begin{equation}\label{eq:gradients}\begin{aligned}
	\eps'\text{-multiplicative approximations to  }\inprod{\mpack_i}{\frac{\exp\Par{\sum_{i \in [n]} w_i \mpack_i}}{\Tr\exp\Par{\sum_{i \in [n]} w_i \mpack_i}}}\; \forall i\in[n], \\
	\Par{\eps', e^{\frac{-\log(nd\rho)}{\eps'}}\Tr(\mcov_i)}\text{-approximations to }\inprod{\mcov_i}{\frac{\exp\Par{-\sum_{i \in [n]} w_i \mcov_i}}{\Tr\exp\Par{-\sum_{i \in [n]} w_i \mcov_i}}}\; \forall i\in[n],
	\end{aligned}\end{equation}
	for $w \in \R^n_{\ge 0}$ with $\lmax\Par{\sum_{i \in [n]} w_i \mpack_i}, \lmin\Par{\sum_{i \in [n]} w_i \mcov_i} \le R$ for $R = O(\tfrac{\log (nd\rho)}{\eps})$.
\end{assumption}
In particular, we observe that the number of iterations of this conjectured subroutine depends polylogarithmically on $\rho$, i.e.\ the runtime is \emph{width-independent}.\footnote{The literature on approximate solvers for positive linear programs and semidefinite programs refer to logarithmic dependences on $\rho$ as width-independent, and we follow this convention in our exposition.} In our settings computing optimal rescaled condition numbers, $\rho = \Theta(\kappa^\star)$; our solver in Section~\ref{ssec:mpcslow} has an iteration count depending linearly on $\rho$. Such runtimes are known for MPC linear programs~\cite{mahoney2016approximating}, however, such rates have been elusive in the SDP setting. While the form of requirements in~\eqref{eq:gradients} may seem somewhat unnatural at first glance, we observe that this is the natural generalization of the error tolerance of known width-independent MPC LP solvers~\cite{mahoney2016approximating}. Moreover, these approximations mirror the tolerances of our width-dependent solver in Section~\ref{ssec:mpcslow} (see Line 6 and Corollary~\ref{cor:approxpack}).

We first record the following technical lemma, which we will repeatedly use.
\begin{lemma}\label{lem:line5leftold}
	Given a matrix $\mzero \preceq \mm \preceq R\id$ for some $R > 0$, sufficiently small constant $\eps$, and $\delta \in (0, 1)$, we can compute $\eps$-multiplicative approximations to the quantities 
	\[\inprod{a_ia_i^\top}{\exp(\mm)} \text{ for all $i \in [n]$, and } \Tr\exp(\mm)\]
	in time $O((\tmv(\mm) R + \nnz(\ma)) \log \frac n \delta)$, with probability at least $1 - \delta$.
\end{lemma}
\begin{proof}
	We discuss both parts separately. Regarding computing the inner products, equivalently, the goal is to compute approximations to all $\norm{\exp(\half \mm) a_i}_2^2$ for $i \in [n]$. First, by an application of Fact~\ref{fact:polyexp} with $\delta = \frac{\eps} 8 \exp(-2R)$, and then multiplying all sides of the inequality by $\exp(R)$, there is a degree-$O(R)$ polynomial such that
	\begin{gather*}
	\Par{1 - \frac{\eps}{8}}\exp\Par{\half \mm} \preceq \exp\Par{\half \mm} - \frac{\eps}{8} \id \preceq p\Par{\half \mm} \preceq \exp\Par{\half \mm} + \frac{\eps}{8}\id \preceq \Par{1 + \frac{\eps}{8}}\exp\Par{\half \mm} \\
	\implies \Par{1 - \frac{\eps}{3}} \exp(\mm) \preceq p\Par{\half \mm}^2 \preceq \Par{1 + \frac{\eps}{3}} \exp(\mm).
	\end{gather*}
	This implies that $\norm{p(\half \mm) a_i}_2^2$ approximates $\norm{\exp(\half \mm)a_i}_2^2$ to a multiplicative $\frac{\eps}{3}$ by the definition of Loewner order. Moreover, applying Fact~\ref{fact:jl} with a sufficiently large $k = O(\log \frac n \delta)$ implies by a union bound that for all $i \in [n]$, $\norm{\mq p(\half \mm) a_i}_2^2$ is a $\eps$-multiplicative approximation to $\norm{\exp(\half \mm)a_i}_2^2$. To compute all the vectors $\mq p(\half \mm) a_i$, it suffices to first apply $p(\half \mm)$ to all rows of $\mq$, which takes time $O(\tmv(\mm) \cdot kR)$ since $p$ is a degree-$O(R)$ polynomial. Next, once we have the explicit $k \times d$ matrix $\mq p(\half \mm)$, we can apply it to all $\{a_i\}_{i \in [n]}$ in time $O(\nnz(\ma) \cdot k)$.
	
	Next, consider computing $\Tr\exp(\mm)$, which by definition has
	\[\Tr\exp(\mm) = \sum_{j \in [d]} \norm{\Brack{\exp\Par{\half \mm}}_{j:}}_2^2.\]
	Applying the same $\mq$ and $p$ as before, we have by the following sequence of equalities
	\begin{align*}
	\sum_{j \in [d]} \norm{\mq \Brack{\exp\Par{\half \mm}}_{j:}}_2^2 &= \Tr\Par{\exp\Par{\half \mm} \mq^\top \mq \exp\Par{\half \mm}} \\
	&= \Tr\Par{\mq \exp\Par{\mm} \mq^\top} = \sum_{\ell \in [k]} \norm{\exp\Par{\half \mm} \mq_{\ell:}}_2^2,
	\end{align*}
	that for the desired approximation, it instead suffices to compute
	\[\sum_{\ell \in [k]} \norm{p\Par{\half \mm} \mq_{\ell:}}_2^2.\]
	This can be performed in time $O(\tmv(\mm) \cdot kR)$ as previously argued.
\end{proof}
A straightforward modification of this proof alongside Lemma~\ref{lem:applyana} also implies that we can compute these same quantities to $p(\ma \mw \ma)$, when we are only given $\mk = \ma^2$, assuming that $\mk$ is reasonably well-conditioned.
We omit the proof, as it follows almost identically to the proofs of Lemmas~\ref{lem:line5leftold},~\ref{lem:sqrttrexp}, and~\ref{lem:sqrtinprod}, the latter two demonstrating how to appropriately apply Lemma~\ref{lem:applyana}.

\begin{corollary}
	\label{cor:line5leftoldker}
	Let $\mk \in \PD^d$ such that $\mk = \ma^2$ and $\kappa (\mk) \leq \kscale$.
	Let $\mw$ be a diagonal matrix such that $\lmax (\ma \mw \ma) \leq R$.
	For $\delta, \eps \in (0, 1)$, we can compute $\eps$-multiplicative approximations to 
	\[
		\inprod{a_ia_i^\top}{\exp(\ma \mw \ma)} \text{ for all $i \in [n]$, and } \Tr\exp(\ma \mw \ma)
	\]
	with probability $\ge 1 - \delta$ in time $O\Par{\tmv(\mk) \cdot R \cdot \Par{R + \sqrt{\kscale} \log \frac{d \kscale}{\delta}} \log \frac n \delta}$.
\end{corollary}

\subsection{Approximating $\kappa^\star$ under Assumption~\ref{assume:mpc}}\label{ssec:ksfast}

In this section, we show that, given Assumption~\ref{assume:mpc}, we obtain improved runtimes for all three types of diagonal scaling problems, roughly improving Theorems~\ref{thm:leftslow},~\ref{thm:rightslow}, and~\ref{thm:slowsqrt} by a $\kappa^\star$ factor.

\paragraph{Inner scalings.}
We first demonstrate this improvement for inner scalings.
\begin{theorem}
	\label{thm:conjleft}
	Under Assumption~\ref{assume:mpc}, there is an algorithm which, given full-rank $\ma \in \R^{n \times d}$ for $n \geq d$ computes $w \in \R^n_{\ge 0}$ such that $\kappa(\ma^\top \mw \ma) \le (1 + \eps)\ksl(\ma)$ for arbitrarily small $\eps = \Theta(1)$, with probability $\ge 1 - \delta$ in time
	\[
		O\left(\nnz (\ma) \cdot \sqrt{\ksl(\ma)} \cdot \poly \log \frac{n \ksl(\ma)}{ \delta}  \right) \; .
	\]
\end{theorem}
\begin{proof}
	For now, assume we know $\ksl(\ma)$ exactly, which we denote as $\ksl$ for brevity.
	Let $\{a_i\}_{i \in [n]}$ denote the rows of $\ma$, and assume that $\norm{a_i}_2 = 1$ for all $i \in [n]$.
	By scale invariance, this assumption is without loss of generality.
	We instantiate Assumption~\ref{assume:mpc} with $\mpack_i = a_i a_i^\top$ and $\mcov_i = \ksl a_i a_i^\top$, for $i \in [n]$.
	It is immediate that a solution yields an inner scaling with the same quality up to a $1 + \eps$ factor, because by assumption \eqref{eq:infeasible} is feasible so $\MPC$ cannot return ``infeasible.''
	
	We now instantiate the primitives in~\eqref{eq:gradients} needed by Assumption~\ref{assume:mpc}. Throughout, note that $\rho = \ksl$ in this setting. Since we run $\MPC$ for $\poly (\log n\ksl)$ iterations, we will set $\delta' \gets \delta \cdot (\poly(n\ksl))^{-1}$ for the failure probability of each of our computations in \eqref{eq:gradients}, such that by a union bound all of these computations are correct.
	
	By Lemma~\ref{lem:line5leftold}, we can instantiate the packing gradients to the desired approximation quality in time $O(\nnz (\ma) \cdot \poly\log\tfrac{n\ksl}{\delta})$ with probability $1 - \delta'$. By Lemmas~\ref{lem:trexpnegm} and~\ref{lem:line5left}, we can instantiate the covering gradients in time $O(\nnz (\ma) \sqrt{\ksl} \cdot \poly\log\frac{n\ksl}{\delta})$ 
	with probability $1 - \delta'$. In applying these lemmas, we use the assumption that $\lmin(\sum_{i \in [n]} w_i \mcov_i) = O(\log n\ksl)$ as in Assumption~\ref{assume:mpc}, and that the covering matrices are a $\ksl$ multiple of the packing matrices so $\lmax(\sum_{i \in [n]} w_i \mcov_i) = O(\ksl \log n \ksl)$. Thus, the overall runtime of all iterations is 
	\[
		O\Par{\nnz(\ma) \cdot \sqrt{\ksl} \cdot \poly \log \frac{n \ksl}{ \delta}  }
	\]
	for $\eps = \Theta (1)$.
	To remove the assumption that we know $\ksl(\ma)$, we can use an incremental search (see Theorem~\ref{thm:leftslow}) on the scaling multiple between $\{\mcov_i\}_{i \in [n]}$ and $\{\mpack_i\}_{i \in [n]}$, starting from $1$ and increasing by factors of $1 + \eps$, adding a constant overhead to the runtime. Our width will never be larger than $O(\ksl(\ma))$ in any run, since $\MPC$ must conclude feasible when the width is sufficiently large.
\end{proof}

\paragraph{Outer scalings.} For simplicity, we will only discuss the case where we wish to symmetrically outer scale a matrix $\mk \in \PD^d$ near-optimally (i.e.\ demonstrating an improvement to Theorem~\ref{thm:slowsqrt} under Assumption~\ref{assume:mpc}). In the case where we have a factorization $\mk = \ma^\top \ma$, a similar improvement to Theorem~\ref{thm:rightslow} immediately follows since $\tmv(\mk) = O(\nnz(\ma))$, so we omit this discussion.

\begin{theorem}\label{thm:conjright}
Under Assumption~\ref{assume:mpc}, there is an algorithm which, given $\mk \in \PD^d$ computes $w \in \R^d_{\ge 0}$ such that $\kappa(\mw^\half \mk \mw^\half) \le (1 + \eps) \ksk(\mk)$ for arbitrarily small $\eps \in \Theta(1)$, with probability $\ge 1 - \delta$ in time
\[O\Par{\tmv(\mk) \cdot \sqrt{\ksk(\mk)} \cdot \poly\log \frac{d\ksk(\mk)}{\delta}}.\]
\end{theorem}
\begin{proof}
Throughout we denote $\ksk \defeq \ksk(\mk)$ for brevity. Our proof follows that of Theorem~\ref{thm:slowsqrt}, which demonstrates that it suffices to reduce to the case where we have a $\mk \in \PD^d$ with $\kappa(\mk) \le \kscale \defeq 3\ksk$, and we wish to find an outer diagonal scaling $\mw \in \PD^d$ such that $\kappa(\mw^\half \mk \mw^\half) \le (1 + \eps) \ksk$. We incur a polylogarithmic overhead on the runtime of this subproblem, by using it to solve all phases of the homotopy method in Theorem~\ref{thm:slowsqrt}, and the cost of an incremental search on $\ksk$.  

To solve this problem, we again instantiate Assumption~\ref{assume:mpc} with $\mpack_i = a_i a_i^\top$ and $\mcov_i = \ksk a_i a_i^\top$, where $\{a_i\}_{i \in [d]}$ are rows of $\ma \defeq \mk^\half$. As in Section~\ref{sec:kernel}, the main difficulty is to implement the gradients in~\eqref{eq:gradients} with only implicit access to $\ma$ , which we again will perform to probability $1 - \delta'$ for some $\delta' = \delta \cdot (\poly(n\ksk))^{-1}$ which suffices by a union bound.
Applying Lemmas~\ref{lem:sqrttrexp} and~\ref{lem:sqrtinprod} with the same parameters as in the proof of Theorem~\ref{thm:slowsqrt} (up to constants) implies that we can approximate the covering gradients in \eqref{eq:gradients} to the desired quality within time
\[
O \Par{\tmv (\mk) \cdot \sqrt{\ksk} \cdot \poly \log \frac{n\ksk}{\delta} } .	
\]
Similarly, Corollary~\ref{cor:line5leftoldker} implies we can compute the necessary approximate packing gradients in the same time. Multiplying by the overhead of the homotopy method in Theorem~\ref{thm:slowsqrt} gives the result.
\end{proof}

\subsection{Average-case conditioning under Assumption~\ref{assume:mpc}}

A number of recent linear system solvers depend on average notions of conditioning, namely the ratio between the average eigenvalue and smallest \cite{StrohmerV06, LeeS13, Johnson013, DefazioBL14, ZhuQRY16, Allen-Zhu17, AgarwalKKLNS20}. Normalized by dimension, we define this average conditioning as follows: for $\mm \in \PD^d$,
\[\tau\Par{\mm} \defeq \frac{\Tr(\mm)}{\lmin(\mm)}.\]
Observe that since $\Tr(\mm)$ is the sum of eigenvalues, the following inequalities always hold:
\begin{equation}\label{eq:taubounds}
d \le \tau\Par{\mm} \le d\kappa\Par{\mm}.
\end{equation}
In analogy with $\ksl$ and $\ksr$, we define for full-rank $\ma \in \R^{n \times d}$ for $n \ge d$, and $\mk \in \PD^d$,
\begin{equation}\label{eq:taustardef}\tsl(\ma) \defeq \min_{\textup{diagonal } \mw \succeq \mzero} \tau\Par{\ma^\top \mw \ma},\; \tsr(\mk) \defeq \min_{\textup{diagonal } \mw \succeq \mzero} \tau\Par{\mw^\half \mk \mw^\half}. \end{equation}
We give an informal discussion on how to use Assumption~\ref{assume:mpc} to develop a solver for approximating $\tsl$ to a constant factor, which has a runtime nearly-matching the fastest linear system solvers depending on $\tsl$ after applying the appropriate rescalings.\footnote{We remark that these problems may be solved to high precision by casting them as an appropriate SDP and applying general SDP solvers, but in this section we focus on fast runtimes.} Qualitatively, this may be thought of as the average-case variant of Theorem~\ref{thm:conjleft}. We defer an analogous result on approximating $\tsr$ (with or without a factorization) to future work for brevity. We remark that a solver for Assumption~\ref{assume:mpc} which symmetrically weights an ``imagined'' orthogonal basis (following Section~\ref{ssec:ksrslow}) likely extends to apply to outer scalings with a factorization. 

To develop our algorithm for approximating $\tsl$, we require several tools. The first is the rational approximation analog of the polynomial approximation in Fact~\ref{fact:polyexp}.

\begin{fact}[Rational approximation of $\exp$ \cite{SachdevaV14}, Theorem 7.1]\label{fact:ratexp}
	Let $\mm \in \PSD^d$ and $\delta > 0$. There is an explicit polynomial $p$ of degree $\Delta = \Theta(\log(\delta^{-1}))$ with absolute coefficients at most $\Delta^{O(\Delta)}$ with
	\[\exp(-\mm) - \delta \id \preceq p\Par{\Par{\id + \frac{\mm}{\Delta}}^{-1}} \preceq \exp(-\mm) + \delta\id.\] 
\end{fact}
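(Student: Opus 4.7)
The plan is to reduce the matrix inequality to a scalar approximation problem via the functional calculus. Since $\mm \in \PSD^d$ is symmetric, both $\exp(-\mm)$ and $(\id + \mm/\Delta)^{-1}$ are diagonalized in the same orthonormal eigenbasis, so it suffices to construct a polynomial $p$ of degree $\Delta$ such that for every scalar $x \ge 0$,
\[\Bigl| p\Bigl(\tfrac{1}{1 + x/\Delta}\Bigr) - e^{-x} \Bigr| \le \delta,\]
and then apply the bound eigenvalue-by-eigenvalue to recover the claimed operator sandwich.

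To construct $p$, I would use the substitution $y = (1 + x/\Delta)^{-1}$, which maps $x \in [0, \infty)$ bijectively onto $y \in (0, 1]$. A short manipulation gives $e^{-x} = e^{\Delta}\, e^{-\Delta/y}$, so the task becomes finding a degree-$\Delta$ polynomial $p(y)$ that approximates $g(y) \defeq e^{\Delta - \Delta/y}$ uniformly on $(0,1]$ to error $\delta$. The function $g$ satisfies $g(1) = 1$, is real-analytic away from $y = 0$, and decays super-exponentially fast as $y \to 0^+$. Because $\Delta = \Theta(\log(\delta^{-1}))$, a constant threshold $\alpha$ (say $\alpha = 1/2$) already forces $g(y) \le e^{-\Delta} = O(\delta)$ on $[0, \alpha]$, so I only need $p$ to approximate $g$ well on the compact interval $[\alpha, 1]$ while remaining bounded by $O(\delta)$ on $[0, \alpha]$.

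On $[\alpha, 1]$ the function $g$ is analytic in a complex neighborhood bounded away from $y = 0$, so classical Chebyshev/Bernstein approximation theory yields a degree-$\Delta$ polynomial achieving uniform error $\delta$ on this interval. The coefficient bound $\Delta^{O(\Delta)}$ follows from explicit derivative estimates on $g$: an inductive calculation shows $|g^{(k)}(y)| \le \Delta^{O(k)}$ on $[\alpha, 1]$, and passing from the Chebyshev to the monomial basis inflates coefficients by at most an additional $\Delta^{O(\Delta)}$ factor. I must additionally check that the approximating polynomial does not blow up on $[0, \alpha]$; this can be arranged either by subtracting a low-degree correction so that $p$ vanishes sufficiently at $y = 0$, or by invoking a Markov-type inequality to control $p$ on $[0, \alpha]$ via its size on $[\alpha, 1]$.

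The main obstacle is the essential singularity of $g$ at $y = 0$, which rules out any single global polynomial approximation on $[0, 1]$ and forces the split into a small-$y$ regime (handled by the rapid decay of the target) and a bulk regime (handled by classical approximation theory); the two regimes must be balanced so that the $\Delta$ required in the bulk and the $\Delta$ required to suppress $g$ on the small-$y$ part match up to constants. A secondary subtlety is the lift from scalar to operator inequality: since $\mm \succeq 0$, every eigenvalue of $(\id + \mm/\Delta)^{-1}$ lies in $(0, 1]$, precisely the domain where the scalar bound is valid, so the pointwise approximation transfers to the operator sandwich without further loss.
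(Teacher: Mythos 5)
A preliminary remark on the comparison: the paper does not prove Fact~\ref{fact:ratexp} at all --- it is imported as a black box from \cite{SachdevaV14} (Theorem 7.1) --- so your attempt is measured against that reference's construction rather than anything in this paper. Your framing is the right one: the spectral reduction to the scalar claim $|p((1+x/\Delta)^{-1})-e^{-x}|\le\delta$ for all $x\ge 0$, the substitution $y=(1+x/\Delta)^{-1}$ giving the target $g(y)=e^{\Delta-\Delta/y}$ on $(0,1]$, and the split at a constant $\alpha$ where $g\le e^{-\Delta}=O(\delta)$ on $(0,\alpha]$. The genuine gap is in the step you treat as a ``secondary'' check: a \emph{single} degree-$\Delta$ polynomial must both approximate $g$ on $[\alpha,1]$ and be $O(\delta)$ on $[0,\alpha]$, and neither of your proposed mechanisms achieves the latter. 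A Markov/Chebyshev-type extrapolation bound goes the wrong way by an exponential margin: a degree-$\Delta$ polynomial bounded by $O(1)$ on $[1/2,1]$ can be as large as roughly $T_\Delta(3)=e^{\Theta(\Delta)}=\mathrm{poly}(1/\delta)$ at $y=0$, whereas you need $O(\delta)=e^{-\Theta(\Delta)}$ on all of $[0,1/2]$; no extrapolation inequality closes that gap, and a generic near-best approximant on $[\alpha,1]$ has no reason to be small near $0$. The alternative, ``subtract a correction so that $p$ vanishes sufficiently at $y=0$,'' is the right instinct (known constructions effectively build a factor $y^{m}$ with $m=\Theta(\Delta)$ into the approximant), but vanishing to some order at the single point $0$ does not control $p$ on all of $[0,\alpha]$, and once you insist on a factor $y^m$ you must approximate $g(y)/y^{m}$ on $[\alpha,1]$ with the remaining degree budget and rebalance the two regimes --- that balancing act is the substance of the theorem, not a footnote.

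A secondary but real issue is the bulk step itself. Since $g$ has the degree $\Delta$ built into it as a scale parameter, ``classical Chebyshev/Bernstein approximation theory'' does not apply off the shelf: derivative-based (Jackson-type) estimates fail here, because $\|g^{(k)}\|_{[\alpha,1]}$ grows like $k!\,C^k$ with additional $\Delta$-dependent factors (your stated bound $\Delta^{O(k)}$ also drops the factorial), and at degree $\Delta$ such bounds do not yield error $e^{-\Theta(\Delta)}$. What does work is a Bernstein-ellipse argument that uses the specific structure of $g$: on the ellipse of parameter $\rho=1+c$ around $[\alpha,1]$ one has $\mathrm{Re}(1/y)\ge 1-O(c^2)$, hence $\max|g|\le e^{O(\Delta c^{2})}$, so the geometric factor $\rho^{-\Delta}=e^{-c\Delta+o(c\Delta)}$ dominates for a small constant $c$ and the $\Theta(\cdot)$ in $\Delta=\Theta(\log(1/\delta))$ absorbs the constants. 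That computation, together with the near-zero control above (and then the coefficient bound in the monomial basis), is exactly what must be supplied; as the paper simply cites \cite{SachdevaV14}, the efficient fix for your write-up is to either invoke that theorem directly or reproduce its explicit construction rather than appeal to generic approximation theory.
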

We also use the runtime of the fastest-known solver for linear systems based on row subsampling, with a runtime dependent on the average conditioning $\tau$. Our goal is to compute reweightings $\mw$ which approximately attain the minimums in \eqref{eq:taustardef}, with runtimes comparable to that of Fact~\ref{fact:sotasgd}.

\begin{fact}[\cite{AgarwalKKLNS20}]\label{fact:sotasgd}
There is an algorithm which given $\mm \in \PD^d$, $b \in \R^d$, and $\delta, \eps \in (0, 1)$ returns $v \in \R^d$ such that
$\norm{v - \mm^{-1} b}_2 \le \eps \norm{\mm^{-1} b}_2$ with probability $\ge 1 - \delta$ in time
\[O\Par{\Par{n + \sqrt{d \tau(\mm)}} \cdot d \cdot  \poly\log\frac{n\tau(\mk)}{\delta\eps}}.\]
\end{fact}
\noindent\textit{Remark.} The runtime of Fact~\ref{fact:sotasgd} applies more broadly to quadratic optimization problems in $\mm$, e.g.\ regression problems of the form $\norm{\ma x - b}_2^2$ where $\ma^\top \ma = \mm$. Moreover, Fact~\ref{fact:sotasgd} enjoys runtime improvements when the rows of $\mm$ (or the factorization component $\ma$) are sparse; our methods in the following discussion do as well as they are directly based on Fact~\ref{fact:sotasgd}, and we omit this discussion for simplicity. Finally, \cite{AgarwalKKLNS20} demonstrates how to improve the dependence on $\sqrt{d\tau(\mm)}$ to a more fine-grained quantity in the case of non-uniform eigenvalue distributions. We defer obtaining similar improvements for approximating optimal rescalings to interesting future work.

\medskip
We now give a sketch of how to use Facts~\ref{fact:ratexp} and~\ref{fact:sotasgd} to obtain near-optimal runtimes for computing a rescaling approximating $\tsl$ under Assumption~\ref{assume:mpc}. Let $\ma \in \R^{n \times d}$ for $n \ge d$ be full rank, and assume that we known $\tsl \defeq \tsl(\ma)$ for simplicity, which we can approximate using an incremental search with a logarithmic overhead. Denote the rows of $\ma$ by $\{a_i\}_{i \in [n]}$. We instantiate Assumption~\ref{assume:mpc} with 
\begin{equation}\label{eq:packcovtau}\mpack_i = \norm{a_i}_2^2,\; \mcov_i = \tsl a_ia_i^\top, \text{ for all } i \in [n],\end{equation}
from which it follows that \eqref{eq:infeasible} is feasible using the reweighting $\mw = \diag{w}$ attaining $\tsl$:
\begin{equation}\label{eq:correcttrace}
\begin{aligned}\lmax\Par{\sum_{i \in [n]} w_i \mpack_i} &= \lmax\Par{\sum_{i \in [n]} w_i \norm{a_i}_2^2} = \Tr\Par{\ma^\top \mw \ma}, \\
\lmin\Par{\sum_{i \in [n]} w_i \mcov_i} &= \tsl \lmin\Par{\sum_{i \in [n]} w_i a_ia_i^\top} = \tsl\lmin\Par{\ma^\top \mw \ma}.
\end{aligned}
\end{equation}
Hence, if we can efficiently implement each step of $\MPC$ with these matrices, it will return a reweighting satisfying \eqref{eq:feasible}, which yields a trace-to-bottom eigenvalue ratio approximating $\tsl$ to a $1 + \eps$ factor. We remark that in the algorithm parameterization, we have $\rho = \tsl$. Moreover, all of the packing gradient computations in \eqref{eq:gradients} are one-dimensional and hence amount to vector operations, so we will only discuss the computation of covering gradients.

Next, observe that Assumption~\ref{assume:mpc} guarantees that for all intermediate reweightings $\mw$ computed by the algorithm and $R = O(\log n\tsl)$, $\lmax(\sum_{i \in [n]} w_i \mpack_i) = \Tr(\ma^\top \mw \ma) \le R$. This implies that the trace of the matrix involved in covering gradient computations is always bounded:
\begin{equation}\label{eq:tracemm}\Tr\Par{\sum_{i \in [n]} w_i \mcov_i} = \tsl \Tr\Par{\ma^\top \mw \ma} \le \tsl R.\end{equation}
To implement the covering gradient computations, we appropriately modify Lemmas~\ref{lem:trexpnegm} and~\ref{lem:line5left} to use the rational approximation in Fact~\ref{fact:ratexp} instead of the polynomial approximation in Fact~\ref{fact:polyexp}. It is straightforward to check that the degree of the rational approximation required is $\Delta = O(\log n\tsl)$. 

Moreover, each of the $\Delta$ linear systems which Fact~\ref{fact:ratexp} requires us to solve is in the matrix
\[\mm \defeq \id + \frac{\sum_{i \in [n]} w_i \mcov_i}{\Delta},\]
which by \eqref{eq:tracemm} and the fact that $\id$ has all eigenvalues $1$, has $\tau(\mm) = O(\tsl)$. Thus, we can apply Fact~\ref{fact:sotasgd} to solve these linear systems in time
\[O\Par{\Par{n + \sqrt{d\tsl}} \cdot d \cdot \poly\log\frac{n\tsl}{\delta}}.\]
Here, we noted that the main fact that e.g.\  Lemmas~\ref{lem:trexpnegm} and~\ref{lem:line5left} use is that the rational approximation approximates the exponential up to a $\text{poly}(n^{-1}, (\tsl)^{-1})$ multiple of the identity. Since all coefficients of the polynomial in Fact~\ref{fact:ratexp} are bounded by $\Delta^{O(\Delta)}$, the precision to which we need to apply Fact~\ref{fact:sotasgd} to satisfy the requisite approximations is $\eps = \Delta^{-O(\Delta)}$, which only affects the runtime by polylogarithmic factors. Combining the cost of computing \eqref{eq:gradients} with the iteration bound of Assumption~\ref{assume:mpc}, the overall runtime of our method for approximating $\tsl$ is
\[O\Par{\Par{n + \sqrt{d\tsl(\ma)}} \cdot d \cdot \poly\log\frac{n\tsl(\ma)}{\delta}},\]
which matches Fact~\ref{fact:sotasgd}'s runtime after rescaling in all parameters up to logarithmic factors.

\section{Applications}
\label{sec:apps}

In this section, we give a number of applications of our rescaling methods to problems in statistical settings (i.e.\ linear system solving or statistical regression) where reducing conditioning measures are effective. We begin by discussing connections between diagonal preconditioning and a semi-random noise model for linear systems in Section~\ref{ssec:semirandom}. We then apply rescaling methods to reduce risk bounds for statistical models of linear regression in Section~\ref{ssec:regression}.

\subsection{Semi-random linear systems}\label{ssec:semirandom}

Consider the following semi-random noise model for solving an overdetermined, consistent linear system $\ma \xtrue = b$ where $\ma \in \R^{n \times d}$ for $n \ge d$. 

\begin{definition}[Semi-random linear systems]\label{def:semirandom}
In the \emph{semi-random noise model} for linear systems, a matrix $\ma_g \in \R^{m \times d}$ with $\kappa(\ma_g^\top \ma_g) = \kappa_g$, $m \ge d$ is ``planted'' as a subset of rows of a larger matrix $\ma \in \R^{n \times d}$. We observe the vector $b = \ma \xtrue$ for some $\xtrue \in \R^d$ we wish to recover.
\end{definition} 

We remark that we call the model in Definition~\ref{def:semirandom} ``semi-random'' because of the following motivating example: the rows $\ma_g$ are feature vectors drawn from some ``nice'' (e.g.\ well-conditioned) distribution, and the dataset is contaminated by an adversary supplying additional data (a priori indistinguishable from the ``nice'' data), aiming to hinder conditioning of the resulting system. 

Interestingly, Definition~\ref{def:semirandom} demonstrates in some sense a shortcoming of existing linear system solvers: their brittleness to \emph{additional, consistent information}. In particular, $\kappa(\ma^\top \ma)$ can be arbitrarily larger than $\kappa_g$. However, if we were given the indices of the subset of rows $\ma_g$, we could instead solve the linear system $b_g = \ma_g \xtrue$ with iteration count dependent on the condition number of $\ma_g$. Counterintuitively, by giving additional rows, the adversary can arbitrarily increase the condition number of the linear system, hindering the runtime of conditioning-dependent solvers. 

The inner rescaling algorithms we develop in Sections~\ref{ssec:kslslow} and~\ref{ssec:ksfast} are well-suited for robustifying linear system solvers to the type of adversary in Definition~\ref{def:semirandom}. In particular, note that
\[\ksl\Par{\ma} \le \kappa\Par{\ma^\top \mw_g \ma} = \kappa\Par{\ma_g^\top\ma_g} = \kappa_g,\]
where $\mw_g$ is the diagonal matrix which is the $0$-$1$ indicator of rows of $\ma_g$. Our solvers for reweightings approximating $\ksl$ can thus be seen as trading off the \emph{sparsity} of $\ma_g$ for the potential of ``mixing rows'' to attain a runtime dependence on $\ksl(\ma) \le \kappa_g$. In particular, our resulting runtimes scale with $\nnz(\ma)$ instead of $\nnz(\ma_g)$, but also depend on $\ksl(\ma)$ rather than $\kappa_g$.

We remark that the other solvers we develop are also useful in robustifying against variations on the adversary in Definition~\ref{def:semirandom}. For instance, the adversary could instead aim to increase $\tau(\ma^\top \ma)$, or give additional irrelevant features (i.e.\ columns of $\ma$) such that only some subset of coordinates $x_g$ are important to recover. For brevity, we focus on the model in Definition~\ref{def:semirandom} in this work.

\subsection{Statistical linear regression}\label{ssec:regression}

The second application we give is in solving noisy variants of the linear system setting of Definition~\ref{def:semirandom}. In particular, we consider statistical regression problems with various generative models.

\begin{definition}[Statistical linear regression]\label{def:statreg}
Given full rank $\ma \in \R^{n \times d}$ and $b \in \R^d$ produced via
\begin{equation}\label{eq:genmodel}b = \ma \xtrue + \xi,\; \xi \sim \Nor(0, \msig),\end{equation}
where we wish to recover unknown $\xtrue \in \R^d$, return $x$ so that (where expectations are taken over the randomness of $\xi$) the \emph{risk} (mean-squared error) $\E[\norm{x - \xtrue}_2^2]$ is small.
\end{definition}

In this section, we define a variety of generative models (i.e.\ specifying a covariance matrix $\msig$ of the noise) for the problem in Definition~\ref{def:statreg}. For each of the generative models, applying our rescaling procedures will yield computational gains, improved risk bounds, or both. We give statistical and computational results for statistical linear regression in both the \emph{homoskedastic} and \emph{heteroskedastic} settings. In particular, when $\msig = \sigma^2 \id$ (i.e.\ the noise for every data point has the same variance), this is the well-studied homoskedastic setting pervasive in stastical modeling. When $\msig$ varies with the data $\ma$, the model is called heteroskedastic (cf.\ \cite{Greene90}). 

In most cases, we do not directly give guarantees on exact mean squared errors via our preprocessing, but rather certify (possibly loose) upper bound surrogates. We leave direct certification of conditioning and risk simultaneously without a surrogate bound as an interesting future direction.

\subsubsection{Heteroskedastic statistical guarantees}

We specify two types of heteroskedastic generative models (i.e.\ defining the covariance $\msig$ in \eqref{eq:genmodel}), and analyze the effect of rescaling a regression data matrix on reducing risk.

\paragraph{Noisy features.} Consider the setting where the covariance in \eqref{eq:genmodel} has the form $\msig = \ma \msig' \ma^\top$, for matrix $\msig' \in \PSD^d$. Under this assumption, we can rewrite \eqref{eq:genmodel} as $b = \ma(\xtrue + \xi')$, where $\xi' \sim \Nor(0, \msig')$. Intuitively, this corresponds to exact measurements through $\ma$, under noisy features $\xtrue + \xi'$. As in this case $b \in \textup{Im}(\ma)$ always, regression is equivalent to linear system solving, and thus directly solving any reweighted linear system $\mw^\half \ma x^* = \mw^\half b$ will yield $x^* = \xtrue + \xi'$. 

We thus directly obtain improved computational guarantees by computing a reweighting $\mw^\half$ with $\kappa(\ma^\top \mw \ma) = O(\ksl(\ma))$. Moreover, we note that the risk (Definition~\ref{def:statreg}) of the linear system solution $x^*$ is independent of the reweighting:
\[\E\Brack{\norm{x^* - \xtrue}_2^2} = \E\Brack{\norm{\xi'}_2^2} = \Tr\Par{\msig'}.\]
Hence, computational gains from reweighting the system are without statistical loss in the risk. 

\paragraph{Row norm noise.} Consider the setting where the covariance in \eqref{eq:genmodel} has the form
\begin{equation}\label{eq:rnnoise}\msig = \sigma^2\diag{\Brace{\norm{a_i}_2^2}_{i \in [n]}}.\end{equation}
Intuitively, this corresponds to the setting where noise is independent across examples and the size of the noise scales linearly with the squared row norm. We first recall a standard characterization of the regression minimizer.

\begin{fact}[Regression minimizer]\label{fact:regmin}
	Let the regression problem $\norm{\ma x - b}_2^2$ have minimizer $x^\star$, and suppose that $\ma^\top\ma$ is invertible. Then,
	\[x^\star = \Par{\ma^\top\ma}^{-1}\ma^\top b.\]
\end{fact}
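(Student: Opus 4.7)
The plan is to verify the closed-form expression by taking a first-order optimality condition approach. Since the objective $f(x) \defeq \norm{\ma x - b}_2^2$ is a convex quadratic in $x$ (its Hessian is $2\ma^\top\ma \succeq 0$), any critical point is a global minimizer, so it suffices to exhibit $x^*$ as the unique solution to $\nabla f(x^*) = 0$.

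First I would expand the objective as $f(x) = x^\top\ma^\top\ma x - 2b^\top\ma x + \norm{b}_2^2$ and compute $\nabla f(x) = 2\ma^\top\ma x - 2\ma^\top b$. Setting this equal to zero produces the normal equations $\ma^\top\ma x = \ma^\top b$. Under the assumption that $\ma$ has full column rank, $\ma^\top\ma$ is positive definite and hence invertible, so the normal equations admit the unique solution $x^* = (\ma^\top\ma)^{-1}\ma^\top b$.

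Finally I would note uniqueness and minimality explicitly: since $\ma^\top\ma \succ 0$, the quadratic $f$ is strictly convex, so the critical point found above is the unique global minimizer, which completes the identification $x^* = (\ma^\top\ma)^{-1}\ma^\top b$. There is no real obstacle here since the statement is a textbook derivation; the only care needed is to invoke the full column rank assumption precisely where invertibility of $\ma^\top\ma$ is required.
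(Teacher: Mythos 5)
Your derivation is correct and matches the standard argument the paper implicitly relies on (it states this as an unproved fact and elsewhere performs the same quadratic expansion and optimality-condition reasoning): expand $\norm{\ma x - b}_2^2$, set the gradient to zero to get the normal equations, and use full column rank to invert $\ma^\top\ma$, with strict convexity giving uniqueness. Nothing further is needed.
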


Using Fact~\ref{fact:regmin}, we directly prove the following upper bound surrogate holds on the risk under the model \eqref{eq:genmodel}, \eqref{eq:rnnoise} for the solution to any reweighted regression problem.

\begin{lemma}\label{lem:hetriskawa}
Under the generative model \eqref{eq:genmodel}, \eqref{eq:rnnoise}, letting $\mw \in \PSD^n$ be a diagonal matrix and 
\[x^\star_w \defeq \argmin_x\Brace{\norm{\mw^\half\Par{\ma x - b}}_2^2},\]
we have
\[\E\Brack{\norm{x^\star_w - \xtrue}_2^2} \le \sigma^2 \frac{\Tr\Par{\ma^\top \mw \ma}}{\lmin\Par{\ma^\top \mw \ma}}.\]
\end{lemma}
\begin{proof}
	By applying Fact~\ref{fact:regmin}, we have that
	\[x^\star_w = \Par{\ma^\top\mw\ma}^{-1}\ma^\top\mw \Par{\ma \xtrue + \xi} = \xtrue + \Par{\ma^\top\mw\ma}^{-1}\ma^\top\mw\xi.\]
	Thus, we have the sequence of derivations
	\begin{equation}\label{eq:wrisk}
	\begin{aligned}
	\E\Brack{\norm{x^\star_w - \xtrue}_{\ma^\top\mw\ma}^2} &= \E\Brack{\norm{\Par{\ma^\top\mw\ma}^{-1}\ma^\top\mw\xi}_{\ma^\top\mw\ma}^2} \\
	&= \E\Brack{\inprod{\mw^\half\xi\xi^\top\mw^\half}{\mw^\half\ma\Par{\ma^\top\mw\ma}^{-1}\ma^\top\mw^\half}} \\
	&= \sigma^2\inprod{\diag{\Brace{w_i\norm{a_i}_2^2}}}{\mw^\half\ma\Par{\ma^\top\mw\ma}^{-1}\ma^\top\mw^\half} \\
	&\le \sigma^2\Tr\Par{\ma^\top\mw\ma}.
	\end{aligned}
	\end{equation}
	The last inequality used the $\ell_1$-$\ell_\infty$ matrix H\"older inequality and that $\mw^\half\ma\Par{\ma^\top\mw\ma}^{-1}\ma^\top\mw^\half$ is a projection matrix, so $\|\mw^\half\ma\Par{\ma^\top\mw\ma}^{-1}\ma^\top\mw^\half\|_\infty = 1$. Lower bounding the squared $\ma^\top \mw \ma$ norm by a $\lmin(\ma^\top \mw \ma)$ multiple of the squared Euclidean norm yields the conclusion.
\end{proof}

We remark that the analysis in Lemma~\ref{lem:hetriskawa} of the surrogate upper bound we provide was loose in two places: the application of H\"older and the norm conversion. Lemma~\ref{lem:hetriskawa} shows that the risk under the generative model \eqref{eq:rnnoise} can be upper bounded by a quantity proportional to $\tau(\ma^\top \mw \ma)$, the average conditioning of the reweighted matrix. 

Directly applying Lemma~\ref{lem:hetriskawa} or further using the inequality $\tau(\ma^\top \mw \ma) \le d \kappa(\ma^\top \mw \ma)$ \eqref{eq:taubounds}, our risk upper bounds improve with the conditioning or average conditioning of the reweighted system. Hence, our rescaling procedures improve both the computational and statistical guarantees of regression under this generative model, albeit only helping the latter through an upper bound.

\subsubsection{Homoskedastic statistical guarantees}

In this section, we work under the homoskedastic generative model assumption. In particular, throughout the covariance matrix in \eqref{eq:genmodel} will be a multiple of the identity:
\begin{equation}\label{eq:homonoise}
\msig = \sigma^2 \id.
\end{equation}
We begin by providing a risk upper bound under the model \eqref{eq:genmodel}, \eqref{eq:homonoise}.
\begin{lemma}\label{lem:homrisk}
	Under the generative model \eqref{eq:genmodel}, \eqref{eq:homonoise}, let $x^\star \defeq \argmin_x\{\norm{\ma x - b}_2^2\}$. Then,
	\begin{equation}\label{eq:homrisk}\E\Brack{\norm{x^\star - \xtrue}_{\ma^\top\ma}^2} = \sigma^2 d \implies \E\Brack{\norm{x^* - \xtrue}_{2}^2} \le \frac{\sigma^2 d}{\lmin(\ma^\top\ma)}.\end{equation}
\end{lemma}
\begin{proof}
	Using Fact~\ref{fact:regmin}, we compute
	\begin{align*}x^\star - \xtrue &= \Par{\ma^\top\ma}^{-1}\ma^\top b - \xtrue\\
	&= \Par{\ma^\top\ma}^{-1}\ma^\top \Par{\ma \xtrue + \xi} - \xtrue = \Par{\ma^\top\ma}^{-1}\ma^\top\xi.\end{align*}
	Therefore via directly expanding, and using linearity of expectation,
	\begin{align*}\E\Brack{\norm{x^* - \xtrue}_{\ma^\top\ma}^2} &= \E\Brack{\norm{\ma\Par{\ma^\top\ma}^{-1}\ma^\top\xi}_2^2} \\
	&= \E\Brack{\inprod{\xi\xi^\top}{\ma\Par{\ma^\top\ma}^{-1}\ma^\top}} = \sigma^2 \Par{\ma\Par{\ma^\top\ma}^{-1}\ma^\top} = \sigma^2 d.
	\end{align*}
	The final implication follows from $\lmin(\ma^\top\ma)\norm{x^* - \xtrue}_2^2 \le \norm{x^* - \xtrue}_{\ma^\top\ma}^2$.
\end{proof}

Lemma~\ref{lem:homrisk} shows that in regards to our upper bound (which is loose in the norm conversion at the end), the notion of adversarial semi-random noise is at odds in the computational and statistical senses. Namely, given additional rows of the matrix $\ma$, the bound \eqref{eq:homrisk} can only improve, since $\lmin$ is monotonically increasing as rows are added. To address this, we give guarantees about recovering reweightings which match the best possible upper bound anywhere along the ``computational-statistical tradeoff curve.'' We begin by providing a weighted analog of Lemma~\ref{lem:homrisk}.

\begin{lemma}\label{lem:weighthomrisk}
Under the generative model \eqref{eq:genmodel}, \eqref{eq:homonoise}, letting $\mw \in \PSD^n$ be a diagonal matrix and 
\[x^\star_w \defeq \argmin_x\Brace{\norm{\mw^\half\Par{\ma x - b}}_2^2},\]
we have
\begin{equation}\label{eq:homriskweight}\E\Brack{\norm{x^\star_w - \xtrue}_{2}^2} \le \sigma^2 d \cdot \frac{\norm{w}_\infty}{\lmin\Par{\ma^\top \mw \ma}}.\end{equation}
\end{lemma}
\begin{proof}
By following the derivations \eqref{eq:wrisk} (and recalling the definition of $x^\star_w$),
\begin{equation}\label{eq:bigtracebound}
\begin{aligned}
\E\Brack{\norm{x^\star_w - \xtrue}_{\ma^\top\mw\ma}^2} &= \E\Brack{\inprod{\xi\xi^\top}{\mw\ma\Par{\ma^\top\mw\ma}^{-1}\ma^\top\mw}} \\
&= \sigma^2\Tr\Par{\mw\ma\Par{\ma^\top\mw\ma}^{-1}\ma^\top\mw}.
\end{aligned}
\end{equation}
Furthermore, by $\mw \preceq \norm{w}_\infty \id$ we have $\ma^\top \mw^2 \ma \preceq \norm{w}_\infty \ma^\top \mw \ma$. Thus,
\[\Tr\Par{\mw\ma\Par{\ma^\top\mw\ma}^{-1}\ma^\top\mw} = \inprod{\ma^\top \mw^2 \ma}{\Par{\ma^\top\mw\ma}^{-1}} \le \norm{w}_\infty \Tr(\id)= d\norm{w}_\infty.\]
Using this bound in \eqref{eq:bigtracebound} and converting to Euclidean norm risk yields the conclusion.
\end{proof}

Lemma~\ref{lem:weighthomrisk} gives a quantitative version of a computational-statistical tradeoff curve. Specifically, we give guarantees which target the best possible condition number of a $0$-$1$ reweighting, subject to a given level of $\lmin(\ma^\top \mw \ma)$. In the following discussion we assume there exists $\ma_g \subseteq \ma$, a subset of rows, satisfying (for known $\kappa_g$, $\nu_g$, and sufficiently small constant $\eps \in (0, 1)$)
\begin{equation}\label{eq:kappanudef} \kappa_g \le \kappa\Par{\ma_g^\top \ma_g} \le (1 + \eps)\kappa_g,\; \frac{1}{\lmin\Par{\ma_g^\top \ma_g}} \le \nu_g.\end{equation}
Our key observation is that we can use existence of a row subset satisfying \eqref{eq:kappanudef}, combined with a slight modification of Algorithm~\ref{alg:decidempc}, to find a reweighting $w$ such that
\begin{equation}\label{eq:boundedreweighting}
\kappa\Par{\ma^\top \mw \ma} = O\Par{\kappa_g},\; \frac{\norm{w}_\infty}{\lmin\Par{\ma^\top \mw \ma}} = O(\nu_g).
\end{equation}

\begin{lemma}
Consider running Algorithm~\ref{alg:decidempc}, with the modification that in Line 7, we set
\begin{equation}\label{eq:tmadef}
\begin{gathered}
x_t \gets \text{an } \frac{\eps}{10} \text{-multiplicative approximation of } \argmax_{\substack{\sum_{i \in [n]} w_i \tma_i \preceq \id \\ x \in \R^n_{\ge 0}}} \inprod{\kappa v_t}{w}, \\
\text{where for all } i \in [n],\; \tma_i \defeq \begin{pmatrix} \ma_i & \mzero_{d \times n} \\ \mzero_{n \times d} & \diag{\frac{\kappa_g}{\nu_g} e_i} \end{pmatrix}.
\end{gathered}
\end{equation}
Then, if \eqref{eq:kappanudef} is satisfied for some $\ma \in \R^{n \times d}$ and row subset $\ma_g \subseteq \ma$, Algorithm~\ref{alg:decidempc} run on $\kappa \gets \kappa_g$ and $\{\ma_i = a_ia_i^\top\}_{i \in [n]}$ where $\{a_i\}_{i \in [n]}$ are rows of $\ma$ will produce $w$ satisfying \eqref{eq:boundedreweighting}.
\end{lemma}
\begin{proof}
We note that each matrix $\tma_i$ is the same as the corresponding $\ma_i$, with a single nonzero coordinate along the diagonal bottom-right block. The proof is almost identical to the proof of Lemma~\ref{lem:decidempccorrect}, so we highlight the main differences here. The main property that Lemma~\ref{lem:decidempccorrect} used was that Line 9 did not pass, which lets us conclude \eqref{eq:ygbound}. Hence, by the approximation guarantee on each $x_t$, it suffices to show that for any $\my_t \in \PSD^d$ with $\Tr(\my_t) = 1$, (analogously to \eqref{eq:optrelate}),
\begin{equation}\label{eq:optgoodtma}
\max_{\substack{\sum_{i \in [n]} w_i \tma_i \preceq \id \\ x \in \R^n_{\ge 0}}} \kappa_g \inprod{\my_t}{\sum_{i \in [n]} w_i \ma_i} \ge 1 - O(\eps).
\end{equation}
However, by taking $w$ to be the $0$-$1$ indicator of the rows of $\ma_g$ scaled down by $\lmax(\ma_g^\top \ma_g)$, we have by the promise \eqref{eq:kappanudef} that
\begin{equation}\label{eq:wtmagood}
\sum_{i \in [n]} w_i \tma_i = \frac{1}{\lam_{\max}(\ma_g^\top \ma_g)}\preceq \id \impliedby \frac{1}{\lam_{\max}(\ma_g^\top \ma_g)} \ma_g^\top \ma_g \preceq \id,\; \frac{\kappa_g}{\nu_g} \cdot \frac{1}{\lam_{\max}(\ma_g^\top \ma_g)} \le 1.
\end{equation}
Now, it suffices to observe that \eqref{eq:wtmagood} implies our indicator $w$ is feasible for \eqref{eq:optgoodtma}, so
\[\max_{\substack{\sum_{i \in [n]} w_i \tma_i \preceq \id \\ x \in \R^n_{\ge 0}}} \kappa_g \inprod{\my_t}{\sum_{i \in [n]} w_i \ma_i} \ge \frac{\lmin\Par{\ma_g^\top \ma_g}}{\lmax\Par{\ma_g^\top \ma_g}} \cdot \kappa_g \ge 1 - O(\eps).\]
The remainder of the proof is identical to Lemma~\ref{lem:decidempccorrect}, where we note the output $w$ satisfies
\[\sum_{i \in [n]} w_i \tma_i \preceq \id,\; \sum_{i \in [n]} w_i \ma_i \succeq \frac{1 - O(\eps)}{\kappa_g} \id,\]
which upon rearrangement and adjusting $\eps$ by a constant yields \eqref{eq:boundedreweighting}.
\end{proof}

By running the modification of Algorithm~\ref{alg:decidempc} described for a given level of $\nu_g$, it is straightforward to perform an incremental search on $\kappa_g$ to find a value satisfying the bound \eqref{eq:boundedreweighting} as described in Theorem~\ref{thm:leftslow}. It is simple to verify that the modification in \eqref{eq:tmadef} is not the dominant runtime in any of Theorems~\ref{thm:leftslow},~\ref{thm:rightslow}, or~\ref{thm:slowsqrt} since the added constraint is diagonal and $\tma_i$ is separable. Hence, for every ``level'' of $\nu_g$ in \eqref{eq:kappanudef} yielding an appropriate risk bound \eqref{eq:homriskweight}, we can match this risk bound up to a constant factor while obtaining computational speedups scaling with $\kappa_g$.  	
	\subsection*{Acknowledgments}
	
	AS was supported in part by a Microsoft Research Faculty Fellowship, NSF CAREER Award CCF-1844855, NSF Grant CCF-1955039, a PayPal research award, and a Sloan Research Fellowship. KT was supported by a Google Ph.D.\ Fellowship, a Simons-Berkeley VMware Research Fellowship, a Microsoft Research Faculty Fellowship, NSF CAREER Award CCF-1844855, NSF Grant CCF-1955039, and a PayPal research award. We would like to thank Huishuai Zhang for his contributions to an earlier version of this project.
	
	\bibliographystyle{alpha}	
	\bibliography{diagonal-scaling}
	\newpage
	\begin{appendix}

\section{Discussion of (specialized) mixed packing-covering SDP formulations}
\label{app:mpcdiscuss}

In this section, we give a brief discussion of the generality of different specialized mixed packing-covering SDP formulations, as stated in \cite{JambulapatiLLPT20}, \cite{JambulapatiSS18}, and this paper. We also discuss the application of our algorithm in Section~\ref{sec:slowoptimal} to the formulation in \cite{JambulapatiSS18}.

In full generality, the mixed packing-covering SDP problem is parameterized by matrices \[\{\mpack_i\}_{i \in [n]}, \mpack, \{\mcov_i\}_{i \in [n]}, \mcov \in \PSD^d,\]
and asks to find the smallest $\mu > 0$ such that there exists $w \in \R^n_{\ge 0}$ with
\begin{equation}\label{eq:mpcmostgen}\sum_{i \in [n]} w_i \mpack_i \preceq \mu \mpack,\; \sum_{i \in [n]} w_i \mcov_i \succeq \mcov.\end{equation}
By redefining $\mpack_i \gets \frac 1 \mu \mpack^{-\half} \mpack_i \mpack^{-\half}$ and $\mcov_i \gets \mcov^{-\half} \mcov_i \mcov^{-\half}$ for all $i \in [n]$ and a given $\mu > 0$, the optimization problem in \eqref{eq:mpcmostgen} is equivalent to testing whether there exists $w \in \R^n_{\ge 0}$ such that 
\begin{equation}\label{eq:mpcid}\sum_{i \in [n]} w_i \mpack_i \preceq \sum_{i \in [n]} w_i \mcov_i.\end{equation}
The above formulation \eqref{eq:mpcid} is studied by \cite{JambulapatiLLPT20}, and no ``width-independent'' solver is known in the literature (namely, testing whether \eqref{eq:mpcid} is feasible to a multiplicative $1 + \eps$ factor with an iteration count polynomial in $\eps^{-1}$ and polylogarithmic in other problem parameters).

This work and \cite{JambulapatiSS18} develop different algorithms for solving specializations of \eqref{eq:mpcmostgen}, \eqref{eq:mpcid} where the packing and covering matrices $\{\mpack_i\}_{i \in [n]}, \{\mcov_i\}_{i \in [n]}$, as well as the constraints $\mpack$, $\mcov$, are multiples of each other. In particular, Problem 3.1 of \cite{JambulapatiSS18} asks, given matrices $\{\ma_i\}_{i \in [n]}, \mb \in \PSD^d$ and scalar $\kappa > 1$, to test feasibility of finding $w \in \R^n_{\ge 0}$ such that
\begin{equation}\label{eq:jssmpc}\mb \preceq \sum_{i \in [n]} w_i \ma_i \preceq \kappa \mb.\end{equation}
We note that \eqref{eq:jssmpc} is a specialization of the feasibility form of \eqref{eq:mpcmostgen}. Furthermore, the formulation \eqref{eq:jssmpc} captures our scaling problems; by setting $\ma_i = a_ia_i^\top$ and $\mb = \id$, and letting $\ma \in \R^{n \times d}$ have rows $\{a_i\}_{i \in [n]}$, \eqref{eq:jssmpc} recovers our inner scaling problem (determining $\ksl(\ma)$). On the other hand, by setting $\ma_i = e_ie_i^\top$ for all $i \in [n]$ where $n = d$, and letting $\mb = \mk \in \PD^d$ be an arbitrary positive definite matrix, \eqref{eq:jssmpc} recovers our outer scaling problem (determining $\ksk(\mk)$). In the case of outer scaling, this formulation constitutes a different strategy for obtaining a near-optimal reweighting than our ``imagining a basis'' strategy in Section~\ref{ssec:ksrslow}.

In \cite{JambulapatiSS18}, an algorithm was given for determining the optimal $\kappa$ in \eqref{eq:jssmpc} up to a $1 + \eps$ multiplicative factor, with a runtime polynomial in $\kappa$ and $\eps^{-1}$ (and polylogarithmic in other parameters). The algorithm of \cite{JambulapatiSS18} was based on generalizing techniques from \cite{Lee017}, and assumed efficient access to inverses and factorizations of weighted combinations $\sum_{i \in [n]} w_i \ma_i$. The \cite{JambulapatiSS18} algorithm also applies to our scaling settings, albeit at larger polynomial dependences on $\kappa$.

 For the specializations of \eqref{eq:jssmpc} described above which capture our inner and outer scaling problems, we provide an alternative algorithm framework in Sections~\ref{sec:slowoptimal} and~\ref{sec:kernel} which refines the dependence of the \cite{JambulapatiSS18} algorithm on the optimal $\kappa$, scaling as $\kappa^{1.5}$. We believe our algorithm has implications for the more general problem of \eqref{eq:jssmpc} as well, under the same assumptions as \cite{JambulapatiSS18} (namely obtaining a rate scaling as $\kappa^{1.5}$ assuming appropriate efficient access to $\sum_{i \in [n]} w_i \ma_i$). This would sharpen the polynomial dependence on $\kappa$ in Theorem 3.3 of \cite{JambulapatiSS18}. However, the implementation would be somewhat different compared to the techniques used in Sections~\ref{sec:slowoptimal} and~\ref{sec:kernel}, as we would require rational approximations to the square root to efficiently simulate access to $\mb^{-\half}$, as opposed to the polynomials used in Section~\ref{sec:kernel}. For brevity and to not detract from the focus of this work, we choose to not pursue these extensions in this paper.

\section{Deferred proofs from Sections~\ref{sec:prelims} and~\ref{sec:diagonalone}}
\label{app:prelims}

\restatepolysqrt*
\begin{proof}	
	We will instead prove the following fact: for any $\eps \in (0, 1)$, there is an explicit degree-$O\left(\sqrt{\kappa}\log \frac \kappa \eps\right)$ polynomial $p$ satisfying
	\begin{align*}
	\max_{x\in [\frac 1 \kappa,1]} |p(x) - \sqrt{x}| \leq \epsilon.
	\end{align*}	
	The conclusion for arbitrary scalars with multiplicative range $[\mu, \kappa\mu]$ will then follow from setting $\eps = \delta \kappa^{-\half}$ (giving a multiplicative error guarantee), and the fact that rescaling the range $[\frac 1 \kappa, 1]$ will preserve this multiplicative guarantee (adjusting the coefficients of the polynomial as necessary, since $\mu$ is known). Finally, the conclusion for matrices follows since $p(\mm)$ and $\mm^\half$ commute.

	Denote $\gamma = \frac 1 \kappa$ for convenience. We first shift and scale the function $\sqrt{x}$ to adjust the region of approximation from $[\gamma,1]$ to $[-1,1]$. In particular, let $h(x) = \sqrt{\frac{1-\gamma}{2}x + \frac{\gamma+1}{2}}$. If we can find some degree-$\Delta$ polynomial $g(x)$ with $|g(x)-h(x)|\leq \epsilon$ for all $x\in [-1,1]$, then 
	\[p(x) = g\left(\frac{2}{1-\gamma}x\ -\ \frac{1 + \gamma}{1-\gamma}\right)\]
	provides the required approximation to $\sqrt{x}$. 
	
	To construct $g$, we take the Chebyshev interpolant of $h(x)$ on the interval $[-1,1]$. Since $h$ is analytic on $[-1,1]$, we can apply standard results on the approximation of analytic functions by polynomials, and specifically Chebyshev interpolants. Specifically, by Theorem 8.2 in \cite{Trefethen:2012}, if $h(z)$ is analytic in an open Bernstein ellipse with parameter $\rho$ in the complex plane, then:
	\begin{align*}
	\max_{x\in [-1,1]} |g(x) - h(x)| \leq \frac{4M}{\rho-1}\rho^{-\Delta}, 
	\end{align*}
	where $M$ is the maximum of $|h(z)|$ for $z$ in the ellipse. 
	It can be checked that $h(x)$ is analytic on an open Bernstein ellipse with parameter $\rho = \frac{1+\sqrt{\gamma}}{1-\sqrt{\gamma}}$ --- i.e.\ with major axis length $\rho + \rho^{-1} = 2\frac{1+\gamma}{1-\gamma}$. We can then check that $M = \sqrt{1+\gamma} \leq \sqrt{2}$ and $\rho - 1 \geq 2\sqrt{\gamma}$. Since for all $\gamma < 1$, 
	\[\left(\frac{1-\sqrt{\gamma}}{1+\sqrt{\gamma}}\right)^{1/2\gamma} \leq \frac{1}{e},\]
	we conclude that $\frac{4M}{\rho-1}\rho^{-\Delta} \leq \epsilon$ as long as $\Delta \geq \frac{1}{2\gamma}\log\left(\frac{\epsilon}{\sqrt{2\gamma}}\right)$, which completes the proof. 
\end{proof}

\restateddd*
\begin{proof}
	Throughout let $\ksk \defeq \ksk(\mk)$ for notational convenience. Let $\mw_\star$ obtain the minimum in the definition of $\ksk$ (Definition~\ref{def:optimal_condition_number}) and let $\mb = \mw_\star^\half\mk\mw_\star^\half$. Also let $\mw_\mb$ be the inverse of a diagonal matrix with the same entries as $\mb$'s diagonal. Note that $\kappa(\mb) = \ksk $ and $\mw_\mb^{\half} \mb \mw_\mb^{\half} = \mw^{\half} \mk \mw^{\half}$. So, to prove Proposition \ref{prop:diagdimensiondependence}, it suffices to prove that
	\[
	\kappa\Par{\mw_\mb^{\half} \mb \mw_\mb^{\half}} \leq \min\left(m,\sqrt{\nnz(\mk)}\right)\cdot \ksk.
	\]
	Let $d_{\max}$ denote the largest entry in $\mw_\mb^{-1}$. We have that $d_{\max} \leq \lmax(\mb)$. Then let $\mm = (d_{\max}\mw_\mb)^{\half} \mb (d_{\max}\mw_\mb)^{\half}$ and note that all of $\mm$'s diagonal entries are equal to $d_{\max}$ and $\kappa\Par{\mm} = 	\kappa(\mw_\mb^{\half} \mb \mw_\mb^{\half})$. Moreover, since $d_{\max}\mw_\mb$ has all entries $\geq 1$, $\lmin(\mm) \geq \lmin(\mb)$. Additionally, since a PSD matrix must have its largest entry on the diagonal, we have that $\|\mm\|_{\text{F}}^2 \leq \nnz(\mm)d_{\max}^2 \leq \nnz(\mm)\lmax(\mb)^2$. Accordingly, $\lmax(\mm) = \|\mm\|_2 \leq \|\mm\|_{\text{F}} \leq \sqrt{\nnz(\mm)}\lmax(\mb).$
	
	From this lower bound on $\lmin(\mm)$ and upper bound on $\lmax(\mm)$, we have that
	\begin{align*}
	\kappa\Par{\mw^{\half} \mk \mw^{\half}} = \kappa\Par{\mm} \leq \frac{\sqrt{\nnz(\mm)}\lmax(\mb)}{\lmin(\mb)} = \sqrt{\nnz(\mm)}\cdot\kappa(\mb).
	\end{align*}
	
	This proves one part of the minimum in Proposition \ref{prop:diagdimensiondependence}. The second, which was already proven in \cite{Sluis:1969} follows similarly. In particular, by the Gershgorin circle theorem we have that $\lmax(\mm) \leq \max_{i\in [d]}\|\mm_{i:}\|_1$, where $\mm_{i:}$ denotes the $i^\text{th}$ row for $\mm$.  
	Since all entries in $\mm$ are bounded by $d_{\max} \leq \lmax(\mb)$, we have that $\max_{i\in [d]}\|\mm_{i:}\|_1 \leq m\lmax(\mb)$, and thus
	\begin{align*}
	\kappa\Par{\mw^{\half} \mk \mw^{\half}} &= \kappa\Par{\mm} \leq \frac{m\lmax(\mb)}{\lmin(\mb)} = m\cdot\kappa(\mb). \qedhere
	\end{align*}
\end{proof}

\restateprodkappa*
\begin{proof}
It is straightforward from $\lam_{\min}(\ma) \id \preceq \ma \preceq \lam_{\max}(\ma)\id$ that 
\[\sqrt{\lam_{\min}(\ma)} \norm{u}_2 \le \norm{\ma^\half u}_2 \le \sqrt{\lam_{\max}(\ma)} \norm{u}_2,\]
and an analogous fact holds for $\mb$. 
Hence, we can bound the eigenvalues of $\ma^\half \mb \ma^\half$:
\begin{gather*}
\lmax\Par{\ma^\half\mb\ma^\half} = \max_{\norm{u}_2 = 1} u^\top \ma^\half \mb \ma^\half \mb u \le \lam_{\max}(\ma)\max_{\norm{v}_2 = 1} v^\top \mb v = \lam_{\max}(\ma) \lam_{\max}(\mb), \\
\lmin\Par{\ma^\half \mb \ma^\half} = \min_{\norm{u}_2 = 1} u^\top \ma^\half \mb \ma^\half \mb u \ge \lam_{\min}(\ma)\min_{\norm{v}_2 = 1} v^\top \mb v = \lam_{\min}(\ma)\lam_{\min}(\mb).
\end{gather*}
Dividing the above two equations yields the claim.
\end{proof} 	%

\section{Deferred proofs from Section~\ref{sec:slowoptimal}}
\label{app:slowoptimal}

We give a proof of Proposition~\ref{prop:optv} in this section. First, we recall an algorithm for the testing variant of a pure packing SDP problem given in \cite{Jambulapati0T20}.

\begin{proposition}[Theorem 5, \cite{Jambulapati0T20}]\label{prop:decisionpack}
	There is an algorithm, $\algtest$, which given matrices $\{\ma_i\}_{i \in [n]}$ and a parameter $C$, is an $\eps$-tolerant tester for the decision problem
	\begin{equation}\label{eq:packdecision}\text{does there exist } w \in \Delta^n \text{ such that } \sum_{i \in [n]} w_i \ma_i \preceq C\id?\end{equation}
	The algorithm $\algtest$ succeeds with probability $\ge 1 - \delta$ and runs in time 
	\[O\Par{\tmv\Par{\Brace{\ma_i}_{i \in [n]}} \cdot \frac{\log^2(nd(\delta\eps)^{-1})\log^2 d}{\eps^5}}.\]
\end{proposition}

\begin{proof}[Proof of Proposition~\ref{prop:optv}]
As an immediate result of Proposition~\ref{prop:decisionpack}, we can solve \eqref{eq:optv} to multiplicative accuracy $\eps$ using a binary search. This reduction is derived as Lemma A.1 of \cite{JambulapatiLLPT20}, but we give a brief summary here. We subdivide the range $[\opt_-, \opt_+]$ into $K$ buckets of multiplicative range $1 + \frac \eps 3$, i.e.\ with endpoints $\opt_- \cdot (1 + \frac \eps 3)^k$ for $0 \le k \le K$ and
\[K = O\Par{\frac 1 \eps \cdot \log\Par{\frac{\opt_+}{\opt_-}}}.\]
We then binary search over $0 \le k \le K$ to determine the value of $\opt(v)$ to $\eps$-multiplicative accuracy, returning the largest endpoint for which the decision variant in Proposition~\ref{prop:decisionpack} returns feasible (with accuracy $\frac \eps 3$). By the guarantees of Proposition~\ref{prop:decisionpack}, the feasible point returned by Proposition~\ref{prop:decisionpack} for this endpoint will attain an $\eps$-multiplicative approximation to the optimization variant \eqref{eq:optv}, and the runtime is that of Proposition~\ref{prop:decisionpack} with an overhead of $O(\log K)$.
\end{proof} 	%

\section{Deferred proofs from Section~\ref{sec:kernel}}
\label{app:kernel}

\restateopnormprod*
\begin{proof}
	We begin with the first entry in the above minimum. Let $v$ be the unit vector with $\norm{\mb v}_2 = \norm{\mb}_\infty$, and note $\norm{\ma \mb v}_2 \ge \frac{1}{\kappa(\ma)} \norm{\ma}_\infty\norm{\mb v}_2$ by definition of $\kappa(\ma)$. Hence,
	\[\norm{\ma \mb}_\infty \ge \norm{\ma \mb v}_2 \ge \frac{1}{\kappa(\ma)} \norm{\ma}_\infty\norm{\mb v}_2 = \frac{1}{\kappa(\ma)} \norm{\ma}_\infty\norm{\mb}_\infty.\]
	We move onto the second entry. Let $v$ be a vector such that $\norm{\ma v}_2$ and $\norm{\mb \ma v}_2 = \norm{\mb}_\infty$; note that $\norm{v}_2 \le \frac{\kappa(\ma)}{\norm{\ma}_\infty}$. The conclusion follows from rearranging the following display:
	\[\frac{\kappa(\ma)\norm{\mb\ma}_\infty}{\norm{\ma}_\infty} \ge \norm{\mb\ma}_\infty \norm{v}_2 \ge \norm{\mb\ma v}_2 = \norm{\mb}_\infty.\]
\end{proof}

\restatelambounds*
\begin{proof}
	To see the first claim, the largest eigenvalue of $\mk + \frac 1 \eps \lmax(\mk)\id$ is at most $(1 + \frac 1 \eps)\lmax(\mk)$ and the smallest is at least $\frac 1 \eps \lmax(\mk)$, so the condition number is at most $1 + \eps$ as desired.
	
	To see the second claim, it follows from the fact that outer rescalings preserve Loewner order, and then combining
	\begin{align*}
	\mk \preceq \mk + \lam \id \implies \lmax\Par{\mw^\half\mk \mw^\half} \le \lmax\Par{\mw^\half \Par{\mk + \lam\id} \mw^\half}, \\
	\mk \succeq \frac 1 {1 + \eps}\Par{\mk + \lam \id} \implies \lmin\Par{\mw^\half \mk \mw^\half} \ge \frac 1 {1 + \eps}\lmin\Par{\mw^\half\Par{\mk + \lam \id}\mw^\half}.
	\end{align*}
\end{proof}

\restatebetweenphase*
\begin{proof}
	First, because outer rescalings preserve Loewner order, it is immediate that
	\[\mk + \frac \lam 2 \id \preceq \mk + \lam \id \implies \lmax\Par{\mw^\half\Par{\mk + \frac \lam 2}\mw^\half \id} \le \lmax\Par{\mw^\half\Par{\mk + \lam \id}\mw^\half} .\]
	Moreover, the same argument shows that
	\[\half \mk + \frac \lam 2 \id \preceq \mk + \frac \lam 2 \id \implies \lmin\Par{\mw^\half\Par{\mk + \frac \lam 2 \id}\mw^\half} \ge \half \lmin\Par{\mw^\half \Par{\mk + \id} \mw^\half}.\]
	Combining the above two displays yields the conclusion.
\end{proof} 	\end{appendix}

\end{document}